\newcommand{\OO}{\mathcal O}
\newcommand{\eps}{\varepsilon}
\newcommand{\N}{\mathbb N}
\newcommand{\F}{\mathbb F}
\newcommand{\Hom}{\operatorname{Hom}}
\newcommand{\Ext}{{\operatorname{Ext}}}
\newcommand{\Irr}{\operatorname{Irr}}
\newcommand{\IBr}{\operatorname{IBr}}
\newcommand{\Aut}{\operatorname{Aut}}
\newcommand{\Stab}{{\operatorname{Stab}}}
\newcommand{\mf}{\operatorname{mf}}
\newcommand{\nth}{\operatorname{th}}
\newcommand{\xcdot}{\phantom{\hspace{0.8pt}}}
\newtheorem{defi}{Definition}[section]
\newtheorem{remark}[defi]{Remark}
\newtheorem{conjecture}[defi]{Conjecture}
\newtheorem{thm}[defi]{Theorem}
\newtheorem{lemma}[defi]{Lemma}
\newtheorem{prop}[defi]{Proposition}
\newtheorem*{question}{Question}
\newtheorem*{thm*}{Theorem}
\title{Arbitrarily large Morita Frobenius numbers}
\author{Florian Eisele}
\address{Department of Mathematics, City, University of London, London EC1V 0HB, United Kingdom
}
\email{florian.eisele@city.ac.uk}
\author{Michael Livesey}
\address{School of Mathematics, University of Manchester, Manchester, M13 9PL, United Kingdom
}
\email{michael.livesey@manchester.ac.uk}
\newtheoremstyle{named}{}{}{\itshape}{}{\bfseries}{.}{.5em}{\thmnote{#3}}
\theoremstyle{named}
\renewcommand{\leq}{\leqslant}
\renewcommand{\geq}{\geqslant}
\subjclass{20C20, 16G10}
\begin{document}

\maketitle

\begin{abstract}
	We construct blocks of finite groups with arbitrarily large Morita Frobenius numbers, an invariant which determines the size of the minimal field of definition of the associated basic algebra. This answers a question of Benson and Kessar. This also improves upon a result of the second author where arbitrarily large $\OO$-Morita Frobenius numbers are constructed.
\end{abstract}

\section{Introduction}

Let $\ell$ be a prime and $k$ an algebraically closed field of characteristic $\ell$. 
For a finite-dimensional $k$-algebra $A$ we define the $n^{\nth}$ Frobenius twist of $A$, denoted $A^{(\ell^n)}$, as follows: as a set, and indeed as a ring,  $A^{(\ell^n)}$ is equal to $A$, but for $\lambda \in k$ and $a\in A^{(\ell^n)}$ we set $\lambda\cdot a = \lambda^{\ell^{-n}} a$ (the multiplication on the right hand side being that of $A$). That is, if we think of a $k$-algebra as a ring with a distinguished embedding $k\hookrightarrow Z(A)$, then that embedding is precomposed with the $n^{\nth}$ power of the Frobenius automorphism to obtain the $k$-algebra structure of $A^{(\ell^n)}$. The result of this construction is clearly isomorphic to $A$ as a ring, but not necessarily as a $k$-algebra.
This leads to the following notion, first defined by Kessar~\cite{KessarDon}.
\begin{defi}
	The \emph{Morita Frobenius number} of $A$, denoted $\mf(A)$, is the smallest $n\in \N$ such that $A$ is Morita equivalent to $A^{(\ell^n)}$ as a $k$-algebra.
\end{defi}
As an alternative characterisation, Kessar~\cite{KessarDon} showed that, for a basic algebra $A$, $\mf(A)$ is the smallest $n\in\N$ such that $A\cong k\otimes_{\F_{\ell^n}} A_0$ for some $\F_{\ell^n}$-algebra $A_0$. For fixed $n\in\N$  there are only finitely many possibilities for $A_0$ in any given dimension, which is why Morita Frobenius numbers are being used to approach Donovan's famous finiteness conjecture (more on this further below). 

In the present paper we are interested in the Morita Frobenius numbers of blocks of finite groups $G$. 
For a block $B$ of $kG$, the Frobenius twist $B^{(\ell^n)}$ is isomorphic as a $k$-algebra to $\sigma^n(B)$, where $\sigma$ is the ring automorphism
\begin{equation}\label{algn:ring_auto}
\sigma:\ kG \longrightarrow kG,\ 
\sum_{g\in G}\alpha_g\xcdot g\mapsto\sum_{g\in G}\alpha_g^{\ell}\xcdot g.
\end{equation}
We can therefore think of a Frobenius twist of a block simply as another block of the same group algebra, Galois conjugate to the original one. And while there is no bound on the number of Galois conjugates of a block, Benson~and~Kessar~\cite{BenKesIneq} observed that Morita Frobenius numbers of blocks tend to be very small, with no known example exceeding Morita Frobenius number two. This prompted them to ask the following.
\begin{question}[{Benson-Kessar, \cite[Question 6.2]{BenKesIneq}}]
	Is there a universal bound on the Morita Frobenius numbers of $\ell$-blocks of finite groups?
\end{question} 
This question, to which we give a negative answer in the present article, has gained much interest in recent years.  In~\cite[Examples 5.1, 5.2]{BenKesIneq} Benson and Kessar constructed blocks with Morita Frobenius number two, the first discovered to be greater than one. The relevant blocks all have a normal, abelian defect group and abelian $\ell'$ inertial quotient with a unique isomorphism class of simple modules. It was also proved that amongst such blocks the Morita Frobenius numbers cannot exceed two~\cite[Remark 3.3]{BenKesIneq}. In work of Benson, Kessar and Linckelmann~\cite[Theorem 1.1]{BenKesLinNorm} the bound of two was extended to blocks that don't necessarily have a unique isomorphism class of simple modules. One can also define Morita Frobenius numbers over a complete discrete valuation ring $\OO$ of characteristic zero with residue field $k$, and in \cite{BenKesLinNorm} it was also shown that the aforementioned bound of two applies equally to the Morita Frobenius numbers of the corresponding blocks defined over $\OO$. Finally, Farrell~\cite[Theorem 1.1]{FarrellMFno} and Farrell and Kessar~\cite[Theorem 1.1]{FarrellKessarRational} proved that the Morita Frobenius number of any block of a finite quasi-simple group is at most four (both over $k$ and over $\OO$).

Our main result (see Theorem~\ref{thm:main}) is the $k$-analogue of~\cite[Theorem 3.6]{LiveseyOLarge}, where the corresponding result is proved for blocks defined over $\OO$. Note that the result in the current paper takes significantly longer to prove as Weiss' criterion to detect $\ell$-permutation modules does not hold over $k$.

\begin{thm*}
	For any $n\in\N$ there exists an $\ell$-block $B$ of $kG$, for some finite group $G$, such that $\mf(B)=n$.
\end{thm*}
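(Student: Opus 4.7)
The plan is to take the same finite group $G$ and block $\tilde B$ of $\OO G$ constructed in~\cite{LiveseyOLarge}, which satisfies $\omf(\tilde B)=n$, and to show that its mod-$\ell$ reduction $B:=k\otimes_{\OO}\tilde B$ also has $\mf(B)=n$.

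The upper bound $\mf(B)\leq n$ is essentially formal. A Morita equivalence between $\tilde B$ and $\sigma^n(\tilde B)$ over $\OO$ is implemented by a $(\tilde B,\sigma^n(\tilde B))$-bimodule $\tilde M$ which is finitely generated projective on each side. Reducing modulo $\ell$, the bimodule $k\otimes_{\OO}\tilde M$ gives a Morita equivalence between $B$ and $\sigma^n(B)\cong (k\otimes_{\OO}\tilde B)^{(\ell^n)}$ over $k$, since projectivity and the bimodule isomorphisms $\tilde M\otimes\tilde M^{\vee}\cong \tilde B$ and $\tilde M^{\vee}\otimes \tilde M\cong\sigma^n(\tilde B)$ all survive tensoring with $k$.

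The heart of the matter is the lower bound $\mf(B)\geq n$. One would proceed by contradiction: suppose there is some $m<n$ and a $(B,\sigma^m(B))$-bimodule $M$ witnessing a $k$-Morita equivalence between $B$ and $\sigma^m(B)$. The cleanest strategy would be to lift $M$ to a $(\tilde B,\sigma^m(\tilde B))$-bimodule $\tilde M$ giving an $\OO$-Morita equivalence, contradicting $\omf(\tilde B)=n$. Over $\OO$, the analogous lifting argument in \cite{LiveseyOLarge} relies on Weiss' criterion to recognise the relevant bimodule as an $\ell$-permutation module, from which lifting properties follow.

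The main obstacle, flagged explicitly by the authors, is precisely that Weiss' criterion does not hold over $k$, so an arbitrary $k$-Morita bimodule need not come from a permutation bimodule and need not lift. To circumvent this, I would exploit the specific structure of the construction from~\cite{LiveseyOLarge}: the block $\tilde B$ is built so that a cyclic group of order $\ell^n$ acts on an auxiliary block by a Frobenius-type automorphism, forcing the minimal field of definition of the basic algebra to be $\F_{\ell^n}$. One can then try to detect this directly on $B$ by computing an invariant of the $k$-Morita equivalence class that is sensitive to the Galois action — for example, the center $Z(B)$ together with its induced Galois orbit, the Ext-quiver of $B$ with its automorphisms, or the orbit of the distinguished bimodule under the group of Morita self-equivalences — and showing it has length exactly $n$ under $\sigma$. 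Assembling such an invariant that survives reduction modulo $\ell$, and verifying it on the specific construction, is where I expect the bulk of the technical work to lie.
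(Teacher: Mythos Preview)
Your proposal contains a genuine gap in the lower bound, and you are candid about it: you identify the obstruction (Weiss' criterion fails over $k$, so a $k$-Morita bimodule need not lift to $\OO$) but do not actually overcome it. The final paragraph lists several possible invariants to compute, but none is computed, and it is not clear that any of them would work without essentially redoing a structural analysis of $B$. As written, the proposal is a correct upper bound together with a sketch of a proof strategy for the lower bound that you yourself flag as incomplete.

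The paper does not attempt to lift anything to $\OO$ at all. Instead it works entirely over $k$ and analyses the block directly. The block $B(\theta)$ is shown to be Morita equivalent to an explicit algebra $B(\theta)_0$ which is a twisted tensor product of two copies of $k[D\rtimes P]$, the twist being governed by $\theta$. The paper then pins down the quiver of $B(\theta)_0$ and enough of its relations (Lemma~\ref{lemma arrow properties}) to show that any $k$-algebra isomorphism $B(\theta)_0\to B(\theta')_0$ forces certain commutation constants to agree, and these constants determine $\theta$ up to inversion (Proposition~\ref{prop non isom}). In other words, the ``invariant sensitive to the Galois action'' you allude to is made completely explicit: it is the family of scalars $\theta([h_{\eta,2},h_{\chi,1}])$ appearing in the commutation relations $\tilde S^\chi_\phi \tilde T^\eta_\zeta = \theta([h_{\eta,2},h_{\chi,1}])\,\tilde T^\eta_\zeta \tilde S^\chi_\phi$, and the bulk of the work is showing these scalars are Morita-invariant. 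Once that is done, choosing $r=\ell^n+1$ and a suitable prime $p$ gives $\mf(B(\theta))=n$ immediately, with no appeal to the $\OO$-result.
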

Hence the questions \cite[Questions 6.2, 6.3]{BenKesIneq} (the second of which is the one mentioned above) both have negative answers.
The blocks realising arbitrarily large Frobenius numbers have elementary abelian defect groups and metabelian $\ell'$ inertial quotients, and turn out to be Morita equivalent to a \emph{twisted tensor product} of the algebra $k[D\rtimes P]$ with itself, where $D$ is an elementary abelian $\ell$-group and $P$ is a cyclic $\ell'$-group. It should be mentioned that while the blocks realising Morita Frobenius number two in \cite{BenKesIneq} are described as \emph{quantum complete intersections}, such algebras can also be realised as iterated twisted tensor products of group algebras of cyclic groups.

	To close, let us quickly explain how the initial motivation to consider Morita Frobenius numbers came from their link with Donovan's conjecture. 
\begin{conjecture}[Donovan]
	Let $D$ be a finite $\ell$-group. Then, amongst all finite groups $G$ and blocks $B$ of $k G$ with defect group isomorphic to $D$, there are only finitely many Morita equivalence classes.
\end{conjecture}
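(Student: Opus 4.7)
The statement is Donovan's conjecture in its classical form, a long-standing open problem whose general case remains unresolved. I will outline the strategy suggested by the setup of the present paper, via a reduction to quasi-simple groups combined with uniform bounds on two invariants of the basic algebra.

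The first step exploits Kessar's reformulation already cited in the introduction: for a fixed defect group $D$, Donovan's conjecture for blocks with defect group $D$ reduces to producing two constants $C_1(D)$ and $C_2(D)$ such that (a) every basic algebra of such a block has $k$-dimension at most $C_1(D)$, and (b) every such block has $\mf(B)\leq C_2(D)$. Indeed, over a fixed finite field $\F_{\ell^n}$ there are only finitely many algebras of bounded dimension up to isomorphism, and collapsing via Morita equivalence over $k$ then leaves only finitely many classes. I would separate the problem into establishing (a) and (b) independently; it is important that the main theorem of the present paper does \emph{not} refute (b), since the counterexamples realising $\mf(B)=n$ use elementary abelian defect groups whose rank grows with $n$.

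The second step is a reduction to quasi-simple groups via the Fong--Reynolds and Külshammer--Puig machinery, in the refined form developed by Düvel and by Eaton--Kessar--Külshammer--Sambale. The plan is to show that a minimal counterexample to (a) or (b) for fixed $D$ must, after quotienting by the largest normal $\ell'$-subgroup and applying Clifford theory to block covers, come from a central extension of a product of quasi-simple groups. Combined with known bounds on $\Out$ groups and Schur multipliers of quasi-simple groups, this would cut the problem down to finitely many quasi-simple input data, and for each we need to verify (a) and (b) directly. For (b) in the quasi-simple case, the results of Farrell and of Farrell--Kessar referenced above already supply the required bound of four.

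The main obstacle, and the reason Donovan's conjecture remains open, is the dimension bound (a). For quasi-simple groups of Lie type in non-defining characteristic with wild defect groups, no uniform bound on the dimension of the basic algebra is presently known: Alperin's weight conjecture would bound the number of simple modules but not the dimensions of their projective covers, i.e.\ the relevant $\Ext$-groups between simples. Attaining (a) appears to require substantially new ingredients beyond source-algebra techniques, and certainly beyond what the twisted-tensor-product construction underlying the main theorem of this paper can offer; the construction here controls $\mf$ precisely because it builds algebras with transparent Galois structure, but it is silent on how large basic algebras can grow for a fixed defect group. I would therefore expect any serious attempt on Donovan's conjecture to stall at step (a) until a genuinely new source of dimension bounds is found.
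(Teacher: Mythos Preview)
The paper does not prove this statement; Donovan's conjecture is recorded in the introduction purely as motivation, with no proof offered or claimed. There is therefore nothing in the paper to compare your attempt against. You recognise this yourself in your opening sentence, and your text is really a survey of the known reduction strategy rather than a proof. That is the honest thing to do here, and your summary of Kessar's splitting into Weak Donovan plus a bound on Morita Frobenius numbers matches what the paper recalls in the introduction.

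One small correction: your condition (a) is phrased as a bound on the dimension of the basic algebra, whereas the Weak Donovan conjecture as stated in the paper bounds the entries of the Cartan matrix. These are equivalent only once one also bounds the number of simple modules in terms of $D$, which does hold (e.g.\ via the bound on $k(B)$ in terms of $|D|$), but it is worth being explicit about that extra ingredient. Otherwise your outline of the reduction and the identification of the dimension bound as the essential obstruction are accurate, and your remark that the constructions in this paper do not threaten Conjecture~\ref{con:bdmf} agrees with what the authors themselves point out.
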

If true, Donovan's conjecture would imply that Morita Frobenius numbers are bounded in terms of a function of the isomorphism class of the defect group.

\begin{conjecture}[{see \cite[Question 6.1]{BenKesIneq}}]\label{con:bdmf}
	Let $D$ be a finite $\ell$-group. Then, amongst all finite groups $G$ and blocks $B$ of $k G$  with defect group isomorphic to $D$, the Morita Frobenius number $\mf(B)$  is bounded.
\end{conjecture}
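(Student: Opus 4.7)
The plan is to derive Conjecture \ref{con:bdmf} from Donovan's conjecture, which precedes it in the text. The key observation is that $\mf(-)$ is a Morita invariant: by Kessar's characterisation \cite{KessarDon}, $\mf(A)$ depends only on the basic algebra of $A$, which is itself a complete Morita invariant. Consequently, if $B_1,\ldots,B_r$ is a set of representatives for the (conjecturally finite) collection of Morita equivalence classes of blocks of finite group algebras over $k$ having defect group isomorphic to $D$, then every block $B$ with defect group $D$ satisfies $\mf(B)=\mf(B_i)$ for some $i$, and hence $\mf(B)\leq\max_{1\leq i\leq r}\mf(B_i)$. In this way Donovan's conjecture for $D$ immediately implies Conjecture \ref{con:bdmf}; in fact, all that is really being used is the Morita-finiteness half of Donovan's conjecture.

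For an unconditional attack one would instead combine the partial results recalled in the introduction. The Benson--Kessar--Linckelmann bound \cite{BenKesLinNorm} gives $\mf(B)\leq 2$ for blocks with normal abelian defect group, and the Farrell and Farrell--Kessar bound \cite{FarrellMFno,FarrellKessarRational} gives $\mf(B)\leq 4$ for blocks of finite quasi-simple groups. One would then pair these with a reduction theorem that, for the given defect group $D$, replaces an arbitrary block with defect group $D$ by one of these tractable shapes. Such reductions are available for several specific families of $D$ (typically small or abelian), and in those cases Conjecture \ref{con:bdmf} becomes a genuine theorem; the strategy is to enlarge the list of such $D$.

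The main obstacle, and the reason Conjecture \ref{con:bdmf} remains open in general, is the absence of a uniform way to control the source algebra of $B$ from the isomorphism type of $D$ alone. The source algebra determines $B$ up to Morita equivalence and is described by a fusion system on $D$ together with cohomological twist data; while $D$ admits only finitely many fusion systems, no general bound is available on the twist data that can arise from actual blocks. A direct approach would have to show that the Galois group of $\overline{\F_\ell}/\F_\ell$ acts on the resulting set of source-algebra classes with orbits of bounded length, and any proof of Conjecture \ref{con:bdmf} that bypasses Donovan's conjecture must, in essence, supply such a bound. The main theorem of the present paper shows that this boundedness cannot hold uniformly as $D$ varies --- the constructed blocks have defect groups of unbounded rank --- but the analogous control for fixed $D$ is what one would need to prove, and seems to be the heart of the problem.
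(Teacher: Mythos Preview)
The statement in question is a \emph{conjecture}, and the paper does not prove it; the surrounding text only observes that Donovan's conjecture would imply it, and that the blocks constructed in the paper do not contradict it because their defect grows with $n$. So there is no ``paper's own proof'' to compare against.

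Your proposal is not a proof either, and to your credit you do not pretend otherwise. The first paragraph derives Conjecture~\ref{con:bdmf} \emph{conditionally} from Donovan's conjecture via the Morita invariance of $\mf(-)$; this is correct and is exactly the implication the paper states in the sentence immediately preceding the conjecture. The remaining two paragraphs are a survey of partial results and a discussion of obstacles, not an argument. As such, nothing here is wrong, but nothing here establishes the conjecture unconditionally, and the paper makes no such claim either. If the intent was to record the implication ``Donovan $\Rightarrow$ Conjecture~\ref{con:bdmf}'', then your first paragraph already does that and matches the paper; the rest is commentary rather than proof.
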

While we cannot contribute much to this, we should point out that the defect of our  blocks realising Morita Frobenius number $n\in \N$ grows exponentially in $n$. Hence our result does not contradict Conjecture~\ref{con:bdmf} and it would in fact be consistent with a logarithmic bound of Morita Frobenius numbers in terms of the rank of $D$. 
 In~\cite[Theorem 1.4]{KessarDon} Kessar proved that Donovan's conjecture  is equivalent to Conjecture~\ref{con:bdmf} together with the so-called Weak Donovan conjecture, which further highlights the importance of understanding and bounding Morita Frobenius numbers.
\begin{conjecture}[Weak Donovan]
	Let $D$ be a finite $\ell$-group. Then there exists a constant $c(D)\in\N$ such that if $G$ is a finite group and $B$ is a block of $k G$ with defect group isomorphic to $D$, then the entries of the Cartan matrix of $B$ are bounded by  $c(D)$.
\end{conjecture}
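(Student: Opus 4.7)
The plan is a reduction to blocks of quasi-simple groups followed by case analysis via the Classification of Finite Simple Groups (CFSG). First, recall that if $C=(c_{ij})$ is the Cartan matrix of $B$ then $c_{ij}=\sum_{\chi\in\Irr(B)} d_{\chi i}d_{\chi j}$ in terms of ordinary decomposition numbers; in particular $c_{ij}\leq k(B)\cdot\max_{\chi,i}d_{\chi i}^2$, where $k(B)=|\Irr(B)|$. Brauer's $k(B)$-conjecture (and its many partial confirmations, notably for $\ell$-solvable groups and abelian defect) gives bounds on $k(B)$ in terms of $|D|$, so it suffices to prove that the decomposition numbers $d_{\chi i}$ are themselves bounded by a function of $D$ alone.

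Next I would reduce the problem of bounding decomposition numbers to the case of blocks of quasi-simple groups. The tools are standard in the area: Fong--Reynolds reduction to remove normal $\ell'$-subgroups; the K\"ulshammer--Puig and Dade theory of blocks covering blocks of normal subgroups; and the inductive-condition machinery developed by Sp\"ath, Navarro and others in the context of the McKay and Alperin weight conjectures. The desired output is that the decomposition matrix of an arbitrary block $B$ with defect group $D$ is controlled by decomposition matrices of blocks of quasi-simple sections whose defect groups are subquotients of $D$, glued together by combinatorial data (fusion systems, $2$-cocycles, inertia) whose complexity is bounded by $|\Aut(D)|$ and hence by $|D|$. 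Since Cartan entries are quadratic in decomposition numbers, any such reduction must also keep track of multiplicities introduced by Clifford-theoretic gluing, which is already a nontrivial bookkeeping task.

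Finally, the quasi-simple case would be attacked using CFSG. Alternating and sporadic groups cause no trouble because, for a fixed defect group $D$, only finitely many blocks arise up to isomorphism. The serious obstacle, and the reason Weak Donovan remains open, is blocks of finite groups of Lie type in non-defining characteristic: the decomposition matrices of unipotent blocks of, for instance, $\GL_n(q)$ depend intricately on $q$ modulo powers of $\ell$ as $n$ grows, and a bound uniform in the defect group alone would essentially require a quantitative form of Brou\'e's abelian defect conjecture together with strong control of Bonnaf\'e--Dat--Rouquier-style Morita equivalences in the non-abelian defect case. This is where the overwhelming majority of the effort would have to be concentrated; the reductive machinery of the first two paragraphs is comparatively well-developed, whereas uniform bounds on Lie-type decomposition numbers in terms of the defect group remain, at present, out of reach outside of low-rank or abelian-defect situations.
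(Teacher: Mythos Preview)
The statement you have been asked to prove is a \emph{conjecture}, and the paper does not prove it. It is stated in the introduction purely for context, to explain how Morita Frobenius numbers connect with Donovan's conjecture via Kessar's result that Donovan is equivalent to Weak Donovan plus a bound on Morita Frobenius numbers. There is therefore no ``paper's own proof'' to compare against.

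Your proposal is not a proof either, and you implicitly concede this when you write ``the reason Weak Donovan remains open'' and ``uniform bounds on Lie-type decomposition numbers in terms of the defect group remain, at present, out of reach''. What you have written is a plausible research programme, not an argument. Several of the steps invoke results that are themselves open or not known in the required generality: Brauer's $k(B)$-conjecture is unproved in general, so using it to bound $k(B)$ in terms of $|D|$ is not available; the claimed reduction controlling the decomposition matrix of an arbitrary block by those of quasi-simple sections with bounded gluing data is not a theorem in the literature in the form you need; and, as you yourself say, the Lie-type case in non-defining characteristic is wide open. Each of these is a genuine gap, not a detail to be filled in.

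In short: there is nothing to compare. The paper treats Weak Donovan as an open problem, and so should you.
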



The article is organised as follows. In $\S$\ref{sec setup}  we introduce the block $B(\theta)$, which becomes the focus of study for the remainder of the paper. We describe the simple $B(\theta)$-modules in $\S$\ref{sec simples} and in $\S$\ref{sec subalg} we study a certain subalgebra of $B(\theta)$ more closely. We introduce $B(\theta)_0$, a $k$-algebra Morita equivalent to $B(\theta)$, in $\S$\ref{sec basic alg} and prove our main theorem in $\S$\ref{sec Mor equ}.

\subsection*{Notation}
For an $\ell'$-group $G$, $e_\chi\in kG$ will denote the primitive central idempotent corresponding to $\chi\in\Irr(G)$ and $1_G\in\Irr(G)$ will signify the trivial character. We will often use the fact that $\IBr(G)=\Irr(G)$ for such a group $G$. 

For an arbitrary group $G$, a normal subgroup $N\lhd G$ and $\chi\in\Irr(N)$, we set $\Irr(G|\chi)$ to be the set of  characters of $G$ appearing as a non-zero constituent in $\chi\uparrow_N^G$.
For $x\in kN$ and $g\in G$, we denote by $x^g=g^{-1}xg\in kN$. Similarly, for $\chi\in\Irr(N)$, we signify by $\chi^g$ the character of $N$ given by $\chi^g(h)=\chi(h^{g^{-1}})$, for all $h\in N$. Note this definition ensures that $e_\chi^g=e_{\chi^g}$. If $\chi,\chi'\in \Irr(N)$ such that $\chi'=\chi^g$, for some $g\in G$, we write $\chi\sim_G \chi'$.

	\section{Setup}\label{sec setup}
	
	In this section we will define the groups and blocks which we later show realise arbitrarily large Morita Frobenius numbers over $k$. All notation introduced in this section will be used throughout the paper. We start by setting, 
	for $i\in \{1,2\}$, 
	\begin{equation}
		P_i=(\F_p,+)=C_p
	\end{equation}
	for some prime $p\neq \ell$, and 
	\begin{equation}
	D_i=\prod_{P_i} C_\ell \bigg/ \left\langle (x,\ldots,x) \ | \ x \in C_\ell \right\rangle\cong C^{p-1}_{\ell}.
	\end{equation}
	We set $d_i^x=(1,\ldots,1,d,1,\ldots,1)\in D_i$, where $d$ is a fixed generated of $C_\ell$, and the position of $d$ is the direct factor of $\prod C_\ell$ labeled by $x\in P_i$. In particular, all $d_i^x$ taken together generate $D_i$. The group
	$P_i$ acts on $D_i$ by permuting the direct factors, i.e. by setting $(d_i^x)^y=d_i^{xy}$ for $x,y\in P_i$. Hence we can form the algebra
	\begin{equation}
		A_i=k[D_i\rtimes P_i].
	\end{equation} 
	Let $L=\langle g_0 \rangle \subseteq \F_p^\times \cong C_{p-1}$ be an $\ell'$-subgroup of order $r>1$. Set 
	\begin{equation}
	H=\langle g_1,g_2,g_z : g_1^r=g_2^r=g_z^r=1, [g_1,g_z]=[g_2,g_z]=1, [g_1,g_2]=g_z\rangle
	\end{equation}
	where we adopt the convention that $[g,h]=ghg^{-1}h^{-1}$,
	 and define the subgroups
	\begin{equation} 
		L_1=\langle g_1\rangle,\
	L_2=\langle g_2\rangle\textrm{ and }Z =\langle g_z\rangle.
	\end{equation}
	 We have $L_1\cong L_2\cong Z\cong L\cong C_r$. 
	 Note we have an action of $\F_p^\times$ (and hence $L$) on each $P_i$ given by multiplication.
	 We can now define an action of $H$ on $(D_1\rtimes P_1)\times (D_2\rtimes P_2)$, with kernel $Z$, in the following way. If $\{i,j\}=\{1,2\}$, then $L_i$ acts on $D_i\rtimes P_i$ by setting
	\begin{equation}
	(d_i^xy)^w=d_i^{(x^w)}y^w\quad \textrm{for $x,y\in P_i$ and $w\in L_i$,} 
	\end{equation}
	and setting the action of $L_i$  on $D_j\rtimes P_j$ to be trivial. 
	\begin{defi}[The group $G$, and the block $B(\theta)$]
	Define
	\begin{equation}
		G=((D_1\rtimes P_1)\times (D_2\rtimes P_2))\rtimes H.
	\end{equation}
	and the following subgroups of $G$
	\begin{equation}
		D=D_1\times D_2,\quad\textrm{and}\quad  E=(P_1\times P_2)\rtimes H.
	\end{equation}
	Let $\theta\in \Irr(Z)$ be a faithful character, and let $e_{\theta}$ be the associated central-primitive idempotent. Define a block $B(\theta)=kGe_\theta$.
	\end{defi}

	\begin{defi}\label{def h chi}
	Let $\theta\in \Irr(Z)$ be faithful, as before, and 
	let $\{i,j\}=\{1,2\}$.
	For each $\chi \in \Irr(L_i)$ define an element $h^\theta_{\chi,i}\in L_j$ such that
	\begin{equation}\label{eqn chi theta}
		\chi(-)=\theta([h^\theta_{\chi,i},-])
	\end{equation}
	and 
	\begin{equation}\label{eqn hchi multiplication}
		h^\theta_{\chi,i}h^\theta_{\eta,i}=h^\theta_{\chi\eta,i} \quad\textrm{ for all $\chi,\eta \in \Irr(L_i)$.}
	\end{equation}
	We will often refer to $h^\theta_{\chi,i}$ as $h_{\chi,i}$ where the choice of $\theta$ is clear from the context.
	\end{defi}
	
	Note that in the foregoing definition, the existence of an $h^\theta_{\chi,i}$ satisfying \eqref{eqn chi theta} is guaranteed by \cite[Lemma 4.1]{HolKesQuant} and the uniqueness of such an $h^\theta_{\chi,i}$ in $L_j$ follows by the fact that $C_H(L_i)=Z\times L_i$. 	
	In order to see that \eqref{eqn hchi multiplication} holds we note that, since $[H,H]\subseteq Z \subseteq Z(H)$, we have 
	\begin{equation}
	[h^\theta_{\chi,i}, g]\xcdot [h^\theta_{\eta,i}, g]=h^\theta_{\chi,i} g (h^\theta_{\chi,i})^{-1}g^{-1} [h^\theta_{\eta,i}, g]=h^\theta_{\chi,i} [h^\theta_{\eta,i}, g] g (h^\theta_{\chi,i})^{-1}g^{-1}=[h^\theta_{\chi,i} h^\theta_{\eta,i}, g]
	\end{equation}
	 for all $g\in L_i$. 	
	 Effectively, the above just fixes an isomorphism
	between $\Irr(L_i) \cong \Hom(C_r, k^\times)\cong C_r$ and $L_j \cong C_r$. 
	%
	%
	%
	
	\section{Simple modules and Brauer characters}\label{sec simples}
	
	From now on, unless we are explicitly considering $B(\theta)$ and $B(\theta')$ for two $\theta,\theta'\in\Irr(Z)$, we denote $B(\theta)$ simply by $B$. Since $D$ acts trivially on every simple $B$-module, we can and do identify $\IBr(B)$ with $\Irr(E|\theta)$. In what follows, by an abuse of notation, we often use $1$ to denote $1_{P_i}$, for $i=1,2$. We define the following elements of $\Irr(E|\theta)$,
	\begin{equation}\label{algn:lab_char}
		\begin{split}
			(1,1)&=(\theta\otimes 1_{P_1\times (P_2\rtimes L_2)})\uparrow_{Z\times P_1\times(P_2\rtimes L_2)}^E=(\theta\otimes 1_{(P_1\rtimes L_1)\times P_2})\uparrow_{Z\times(P_1\rtimes L_1)\times P_2}^E,\\
			(\phi,1)&=(\theta\otimes \phi\otimes 1_{P_2\rtimes L_2})\uparrow_{Z\times P_1\times (P_2\rtimes L_2)}^E,\\
			(1,\psi)&=(\theta\otimes 1_{P_1\rtimes L_1}\otimes \psi)\uparrow_{Z\times (P_1\rtimes L_1)\times P_2}^E,\\
			(\phi,\psi)&=(\theta\otimes \phi\otimes \psi)\uparrow_{Z\times P_1\times P_2}^E,
		\end{split}
	\end{equation}
	for all $\phi\in\Irr(P_1)\setminus\{1\}$ and $\psi\in\Irr(P_2)\setminus\{1\}$. Note that, since $C_{L_1}(L_2)=C_{L_2}(L_1)=\{1\}$ and $\theta$ is faithful, $\Stab_H(\theta\otimes 1_{L_i})=Z\times L_i$, for all $i=1,2$. Also, as any non-trivial $\phi\in\Irr(P_i)$ is faithful, $\Stab_{L_i}(\phi)=\{1\}$, for all $i=1,2$. It follows that all the characters in \eqref{algn:lab_char} are indeed irreducible.
	
	\begin{lemma}\label{lem:simples}
		$\Irr(E|\theta)=\{(\phi,\psi)|\phi\in\Irr(P_1),\ \psi\in \Irr(P_2)\}$ and $(\phi,\psi)=(\phi',\psi')$ if and only if either $\phi=\phi'$ and $\psi=\psi'$,
		 or $\phi,\psi,\phi',\psi'\neq 1$ and $\phi\sim_{L_1}\phi'$, $\psi\sim_{L_2}\psi'$.
		 Moreover,
		\begin{align*}
			\deg(1,1)=\deg(\phi,1)=\deg(1,\psi)=r, \quad \deg(\phi,\psi)=r^2,
		\end{align*}
		for all $\phi\in\Irr(P_1)\setminus\{1\}$ and $\psi\in\Irr(P_2)\setminus\{1\}$.
	\end{lemma}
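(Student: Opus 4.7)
The approach is Clifford theory with respect to the normal subgroup $K:=Z\times P_1\times P_2\lhd E$, noting that $E/K\cong H/Z\cong L_1\times L_2$ (since $[H,H]=Z$) acts on $K$ trivially on $Z$ and via the multiplication action of $L_i$ on $P_i$. Every character of $K$ above $\theta$ has the form $\theta\otimes\phi\otimes\psi$ with $\phi\in\Irr(P_1)$, $\psi\in\Irr(P_2)$, and since $\Stab_{L_i}(\phi)=\{1\}$ for non-trivial $\phi$, the $L_1\times L_2$-orbits on $\Irr(K|\theta)$ split into four types according to whether $\phi$ and/or $\psi$ is trivial, with corresponding $E$-stabilisers $E$, $E_2:=Z\times P_1\times(P_2\rtimes L_2)$, $E_1:=Z\times(P_1\rtimes L_1)\times P_2$, and $K$ itself. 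All four families in \eqref{algn:lab_char} are thus accounted for as induced characters arising from these Clifford correspondents, and the degree assertions $r^2=[E:K]$ and $r=[E:E_i]$ follow immediately.

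The identification of equalities is the main content of the lemma. For $\phi,\psi\neq 1$ the stabiliser in $E$ is $K$, so Clifford gives a single $E$-irreducible per orbit, and two characters $(\phi,\psi)$ and $(\phi',\psi')$ coincide iff the inducing characters are $E$-conjugate, iff $\phi\sim_{L_1}\phi'$ and $\psi\sim_{L_2}\psi'$. For $\phi\neq 1,\psi=1$ the character $\theta\otimes\phi\otimes 1$ extends to the $1$-dimensional $\theta\otimes\phi\otimes 1_{P_2\rtimes L_2}$ on $E_2$, so by Clifford there are $r$ irreducibles of $E$ above this orbit, obtained as inductions of $\theta\otimes\phi\otimes\mu$ for $\mu\in\Irr(L_2)$. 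The key computation, using the Heisenberg relation $g\ell g^{-1}=[g,\ell]\,\ell$ with $[g,\ell]\in Z$ for $g\in L_1$, $\ell\in L_2$, shows that conjugation by $g\in L_1$ sends $\theta\otimes\phi\otimes 1_{P_2\rtimes L_2}$ to $\theta\otimes\phi^g\otimes\theta([g^{-1},-])|_{L_2}$; since the commutator pairing $L_1\times L_2\to Z$ composed with $\theta$ is non-degenerate (cf.\ Definition~\ref{def h chi} and \cite[Lemma 4.1]{HolKesQuant}), the map $g\mapsto\theta([g^{-1},-])|_{L_2}$ is a bijection $L_1\to\Irr(L_2)$. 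Hence the $r$ labels $(\phi^g,1)$ for $g\in L_1$ bijectively parametrise the $r$ $E$-irreducibles above the orbit, proving they are pairwise distinct; the case $(1,\psi)$ is entirely symmetric.

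Finally, for $(1,1)$: any $\chi\in\Irr(E)$ lying over $\theta\otimes 1\otimes 1$ must contain $P_1\times P_2$ in its kernel and hence factor through $H=E/(P_1\times P_2)$, so $\Irr(E|\theta\otimes 1\otimes 1)$ is the inflation of $\Irr(H|\theta)$. Since $H$ has order $r^3$ with centre $Z$ of order $r$ and $\theta$ is faithful, a standard count gives $|\Irr(H|\theta)|=1$ with unique element $\tilde\theta$ of degree $r$. The two induced characters appearing in the definition of $(1,1)$ in \eqref{algn:lab_char} both have degree $r$ and lie above $\theta\otimes 1\otimes 1$, so both equal the inflation of $\tilde\theta$; this gives the single character $(1,1)$ of degree $r$. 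Completeness of the list is confirmed by a dimension count: the squared degrees sum to $r^2+2(p-1)\,r^2+((p-1)/r)^2\,r^4=(rp)^2=|E|/|Z|=\dim_k(kE\cdot e_\theta)$, so $\Irr(E|\theta)$ is exhausted. The main obstacle is the $(\phi,1)$ case (and its mirror): the $L_1$-orbit of $\phi$ forms a single Clifford orbit but produces $r$ distinct $E$-irreducibles, and correctly matching the labels $(\phi^g,1)$ to these requires the non-degeneracy of the commutator pairing encoded by $\theta$ in the Heisenberg group $H$.
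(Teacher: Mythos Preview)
Your proof is correct and follows essentially the same approach as the paper: both rely on Clifford theory and the key fact that the $\theta$-twisted commutator pairing $L_1\times L_2\to k^\times$ is non-degenerate, which the paper phrases as ``$L_1$ acts regularly on $\Irr(Z\times L_2|\theta)$'' and you phrase via the bijection $L_1\to\Irr(L_2)$. Your version is simply a more explicit unpacking of the Clifford correspondence (with respect to $K=Z\times P_1\times P_2$) that the paper leaves to the reader, together with a confirming dimension count.
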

	
	\begin{proof}
		We claim that
		\begin{align*}
			\{(1,1)\}&=\Irr(E|\theta\otimes 1_{P_1}\otimes 1_{P_2}),\\
			\{(\phi,1)\}_{\phi\in\Irr(P_1)\setminus\{1\}}&=\bigcup_{\mu\in\Irr(P_1)\setminus\{1\}}\Irr(E|\theta\otimes\mu\otimes 1_{P_2}),\\
			\{(1,\psi)\}_{\psi\in\Irr(P_2)\setminus\{1\}}&=\bigcup_{\nu\in\Irr(P_2)\setminus\{1\}}\Irr(E|\theta\otimes 1_{P_1}\otimes\nu),\\
			\{(\phi,\psi)\}_{\phi\in\Irr(P_1)\setminus\{1\}, \psi\in\Irr(P_2)\setminus\{1\}}&=\bigcup_{\substack{\mu\in\Irr(P_1)\setminus\{1\}\\ \nu\in\Irr(P_2)\setminus\{1\}}} \Irr(E|\theta\otimes \mu\otimes\nu).	
		\end{align*}
		These equalities can all be readily checked. The main point is that, by the comments preceding the lemma, $L_1$ acts regularly on $\Irr(Z\times L_2|\theta)$ and $L_2$ on $\Irr(Z\times L_1|\theta)$. These facts are needed to prove the first three equalities. The fourth is more straightforward. The fact that there are no duplicates, other than the desired ones, is again a consequence of the regularity of these actions. 
		It is a simple task to verify the degrees.
	\end{proof}

    \begin{prop}\label{prop isoms}
	\begin{enumerate}
		\item Let $g_i\in\Aut(P_i) \cong \F_p^\times$ for $i\in\{1,2\}$. The following automorphism of $G$ induces an automorphism of $B(\theta)$,
		\begin{equation}
			\begin{array}{lll}			d_i^xy&\mapsto d_i^{(x^{g_i})}\xcdot y^{g_i},&\text{ for all }i\in\{1,2\}\text{ and }x,y\in P_i,\\
			h&\mapsto h,&\text{ for all }h\in H.
			\end{array}
		\end{equation}
		Furthermore, the corresponding permutation of $\IBr(B(\theta))$ is given by ${(\phi,\psi)\mapsto(\phi^{g_1},\psi^{g_2}})$, for all $\phi\in\Irr(P_1), \psi\in\Irr(P_2)$.
		\item The following automorphism of $G$ induces an isomorphism $B(\theta)\xrightarrow{\sim} B(\theta^{-1})$,
		\begin{equation}
			\begin{array}{lll}		
			(x_1,x_2)&\mapsto(x_2,x_1),&\text{ for all }(x_1,x_2)\in (D_1\rtimes P_1)\times (D_2\rtimes P_2),\\
			z&\mapsto z^{-1},&\text{ for all }z\in Z,\\
			g_i&\mapsto g_j,&\text{  (for $\{i,j\}=\{1,2\}$)}, 
			\end{array}
		\end{equation}
		where $g_1$ and $g_2$ are the generators for $L_1$ and $L_2$ defined in \S\ref{sec setup}.
		Also, for the topmost assignment recall that $P_1$ and $P_2$, as well as $D_1$ and $D_2$, are defined as two copies of the same group, i.e. we may identify them. 
		 Furthermore, the corresponding bijection $\IBr(B(\theta))\longrightarrow\IBr(B(\theta^{-1}))$ is given by $(\phi,\psi)\mapsto(\psi,\phi)$, where we identify $\Irr(P_1)$ and $\Irr(P_2)$.
	\end{enumerate}
\end{prop}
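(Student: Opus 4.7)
The plan in both parts follows three steps: verify that the assignment defines a group automorphism of $G$, track its effect on the idempotent $e_\theta$ to obtain an algebra isomorphism of blocks, and then read off the induced bijection on Brauer characters by pushing the characters in \eqref{algn:lab_char} through.

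\textbf{Part (1).} Write $\alpha$ for the assignment. Since $\alpha$ is the identity on $H$ and acts on each $D_i \rtimes P_i$ as $d_i^x y \mapsto d_i^{x^{g_i}} y^{g_i}$, the only relations needing inspection are the semidirect relation $(d_i^x)^y = d_i^{xy}$ inside $D_i \rtimes P_i$ and the action $(d_i^x y)^w = d_i^{x^w} y^w$ of $w \in L_i$. Both reduce to the identity $(x^w)^{g_i} = (x^{g_i})^w$, which holds because multiplication in $\F_p^\times$ is commutative; the trivial actions of $L_j$ ($j \neq i$) and of $Z$ on $D_i \rtimes P_i$ transport trivially. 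As $\alpha$ fixes $Z$ pointwise, $\alpha(e_\theta) = e_\theta$, so $\alpha$ restricts to an automorphism of $B(\theta)$. For the induced permutation of $\IBr(B(\theta)) = \Irr(E|\theta)$, note that $\alpha$ preserves the subgroup $Z \times P_1 \times P_2$ from \eqref{algn:lab_char} and restricts there to $(z, y_1, y_2) \mapsto (z, y_1^{g_1}, y_2^{g_2})$. Using the convention $\chi^g(h) = \chi(h^{g^{-1}})$, precomposing $\theta \otimes \phi \otimes \psi$ with $\alpha^{-1}$ yields $\theta \otimes \phi^{g_1} \otimes \psi^{g_2}$, and inducing back up identifies the image of $(\phi, \psi)$ as $(\phi^{g_1}, \psi^{g_2})$.

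\textbf{Part (2).} Write $\beta$ for the assignment. Checking $\beta$ is a homomorphism reduces to the defining relations of $H$: the $r$th-power relations are immediate, the centrality $[g_i, g_z] = 1$ is preserved because $Z$ is $\beta$-stable and central in $H$, and the crucial relation $[g_1, g_2] = g_z$ is preserved because $\beta([g_1, g_2]) = [g_2, g_1] = g_z^{-1} = \beta(g_z)$. Simultaneously swapping the two factors $D_i \rtimes P_i$ and the two subgroups $L_i$ interchanges the two symmetric copies of the action of $H$ on $(D_1 \rtimes P_1) \times (D_2 \rtimes P_2)$, so the full action is preserved. Next, $\beta(e_\theta) = |Z|^{-1} \sum_{z \in Z} \theta(z^{-1}) z^{-1} = e_{\theta^{-1}}$ by reindexing $z \mapsto z^{-1}$, giving the isomorphism $B(\theta) \xrightarrow{\sim} B(\theta^{-1})$. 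Since $\beta$ restricts to the involution $(z, y_1, y_2) \mapsto (z^{-1}, y_2, y_1)$ on $Z \times P_1 \times P_2$, precomposing $\theta \otimes \phi \otimes \psi$ with $\beta^{-1}$ produces $\theta^{-1} \otimes \psi \otimes \phi$, which induces to $(\psi, \phi) \in \Irr(E|\theta^{-1})$, as claimed.

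The proof is essentially a sequence of routine generators-and-relations verifications. The only delicate spot is the commutator relation $[g_1, g_2] = g_z$ in part (2): this is precisely what forces the assignment $g_z \mapsto g_z^{-1}$, and hence forces the target block to be $B(\theta^{-1})$ rather than $B(\theta)$. The secondary point requiring care is keeping the direction of the induced Brauer-character bijection straight, which is where the convention $\chi^g(h) = \chi(h^{g^{-1}})$ must be tracked faithfully when transporting induced characters.
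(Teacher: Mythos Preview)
Your proof is correct and follows exactly the approach the paper intends: the paper's own proof is the single sentence ``This is all straightforward to check,'' and you have simply carried out that check in detail, verifying the group-automorphism relations, the effect on $e_\theta$, and the induced bijection on Brauer characters via \eqref{algn:lab_char}.
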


\begin{proof}
	This is all straightforward to check.
\end{proof}
	
	\section{Generators and relations for $A_i=k[D_i\rtimes P_i]$}\label{sec subalg}
	
	Let us now give a description of the (isomorphic) algebras $A_i$ for $i\in\{1,2\}$ in terms of quiver and relations. This description will be used implicitly throughout the remainder of the paper. For the sake of readability, we will use the same notation for the generators of $A_1$ and $A_2$. 
	
	\begin{defi}\label{defi e s}
	 Set 
	\begin{equation}
		s_{\phi} = \sum_{g\in P_i} \phi(g^{-1}) \xcdot d_i^g \in k[D_i] \subset A_i \quad \textrm{for $\phi \in \Irr(P_i)\setminus \{1\}$},
	\end{equation}
	as well as
	\begin{equation}
	s_{\psi,\phi} = e_\psi \xcdot s_{\phi} \in A_i\quad \textrm{for $\psi \in \Irr(P_i)$ and $\phi \in \Irr(P_i)\setminus \{1\}$}.
	\end{equation}
	\end{defi}
	Note that $\Irr(P_i)\setminus \{1\}$ equals the set of constituents of the (multiplicity-free) $k[P_i]$-module $k\otimes_{\F_\ell}D_i$ and hence, by \cite[Proposition 5.2]{EiseleSL2}, of $J(k[D_i])/J^2(k[D_i])$.
	\begin{prop}[{see \cite[Proposition 5.3]{EiseleSL2}}]
	\begin{enumerate}
	\item
	The $e_{\psi}$ for $\psi\in\Irr(P_i)$ form a full set of primitive idempotents in $A_i$.
	\item The $s_{\psi,\phi}$ map to a basis of $J(A_i)/J^2(A_i)$ and $e_{\psi}s_{\psi,\phi}=s_{\psi,\phi}e_{\psi\xcdot \phi}$. That is, the $s_{\psi,\phi}$ correspond to arrows in the quiver of $A_i$.
	\item
	The relations between the arrows are generated by
	\begin{equation}
		s_{\psi,\phi}\xcdot s_{\psi\xcdot \phi,\zeta}=s_{\psi,\zeta}\xcdot s_{\psi\xcdot \zeta, \phi}\quad \textrm{for $\psi \in \Irr(P_i)$ and $\phi,\zeta \in \Irr(P_i)\setminus \{1\}$}
	\end{equation}
	and 
	\begin{equation}\label{eqn l power relation}
		s_{\psi,\phi} \xcdot s_{\psi\xcdot \phi,\phi}\xcdot \cdots\xcdot s_{\psi\xcdot \phi^{\ell-1},\phi}=0\quad \textrm{for $\psi \in \Irr(P_i)$ and $\phi \in \Irr(P_i)\setminus \{1\}$}.
	\end{equation}
	\end{enumerate}
	\end{prop}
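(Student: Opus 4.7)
The plan is to exploit the semidirect product structure $A_i = k[D_i] \rtimes k[P_i]$, in which $k[P_i]$ is semisimple (as $p \neq \ell$) and $k[D_i]$ is commutative local. The ideal $J(k[D_i]) \cdot A_i$ is nilpotent and two-sided with quotient $A_i / (J(k[D_i]) \cdot A_i) \cong k[P_i]$ semisimple, so it coincides with $J(A_i)$. This identifies $A_i / J(A_i)$ with $k[P_i]$, whence the $e_\psi$ for $\psi \in \Irr(P_i)$ form a full set of primitive idempotents — primitivity in $A_i$ follows from $e_\psi A_i e_\psi / J(e_\psi A_i e_\psi) \cong e_\psi k[P_i] e_\psi \cong k$. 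This settles part (1).

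For part (2), the quotient $J(k[D_i])/J^2(k[D_i])$ carries a $k[P_i]$-module structure given by $k \otimes_{\F_\ell} D_i$. As a $k[C_p]$-module this is the reduction mod $\ell$ of the augmentation ideal of the regular representation, and hence decomposes into the nontrivial simple $k[P_i]$-modules, each appearing with multiplicity one. The element $s_\phi$ is the standard projector onto the $\phi$-isotypic summand and satisfies $g \cdot s_\phi \cdot g^{-1} = \phi(g) s_\phi$ for $g \in P_i$; translating through the semidirect product this gives $e_\psi s_\phi = s_\phi e_{\psi \phi}$, so the $s_{\psi,\phi} = e_\psi s_\phi$ form a basis of $J(A_i)/J^2(A_i)$ with the stated bimodule property.

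The relations in part (3) then follow by direct computation. Commutativity of $k[D_i]$ gives $s_{\psi,\phi} s_{\psi\phi,\zeta} = e_\psi s_\phi s_\zeta e_{\psi\phi\zeta} = e_\psi s_\zeta s_\phi e_{\psi\zeta\phi} = s_{\psi,\zeta} s_{\psi\zeta,\phi}$. The telescoping product in \eqref{eqn l power relation} collapses via the bimodule identity to $e_\psi s_\phi^\ell e_{\psi \phi^\ell}$; since $k[D_i]$ is commutative of characteristic $\ell$ and each $d_i^g$ has order $\ell$, the Frobenius gives $s_\phi^\ell = \sum_{g \in P_i} \phi(g^{-1})^\ell (d_i^g)^\ell = \sum_{g \in P_i} \phi(g^{-\ell})$, which vanishes because $g \mapsto g^\ell$ permutes $P_i$ and $\phi \neq 1$.

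The main obstacle is showing these are \emph{all} the relations. My approach would be a dimension count. The abstract $k$-algebra $\tilde A$ presented by the given quiver and relations admits a surjection onto $A_i$, so it suffices to bound $\dim_k \tilde A \leq \dim_k A_i = p \cdot \ell^{p-1}$. Using the commutation relations, every path can be rewritten in a normal form $e_\psi \cdot s_{\phi_1}^{a_1} \cdots s_{\phi_{p-1}}^{a_{p-1}}$ for a fixed enumeration $\phi_1,\ldots,\phi_{p-1}$ of $\Irr(P_i) \setminus \{1\}$, and the $\ell$-th power relations restrict the exponents to $0 \leq a_j < \ell$, giving at most $p \cdot \ell^{p-1}$ spanning elements. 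Equivalently, the subalgebra generated by the $s_\phi$ modulo these relations is a truncated polynomial ring in $p-1$ variables killed by $\ell$-th powers and so has dimension $\ell^{p-1} = \dim_k k[D_i]$; together with the semisimple action of $k[P_i]$ this reconstructs the skew group algebra.
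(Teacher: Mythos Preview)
Your argument is correct. The paper does not actually prove this proposition; it simply cites \cite[Proposition~5.3]{EiseleSL2} and states the result. So there is no ``paper's proof'' to compare against beyond the reference.

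Your proof supplies exactly the kind of argument one expects behind that citation: the identification $J(A_i)=J(k[D_i])\cdot A_i$ from the normal Sylow $\ell$-subgroup, the decomposition of $J(k[D_i])/J^2(k[D_i])\cong k\otimes_{\F_\ell} D_i$ into nontrivial $P_i$-characters, the direct verification of the commutation and $\ell$-th power relations in the commutative characteristic-$\ell$ ring $k[D_i]$, and the dimension count showing these relations are complete. A couple of minor points of phrasing: $s_\phi$ is not literally a ``projector'' but rather a spanning vector of the one-dimensional $\phi$-isotypic component of $J/J^2$; and in the conjugation computation one should get $g\, s_\phi\, g^{-1}=\phi(g^{-1})s_\phi$ with the paper's conventions, though this still yields $e_\psi s_\phi = s_\phi e_{\psi\phi}$ as claimed. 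Neither affects the validity of the argument.
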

	A basis of $J(A_i)$ is given by elements of the form
	\begin{equation}
		s_{\psi,\boldsymbol{\phi}}=s_{\psi,\phi_1}s_{\psi\phi_1,\phi_2}\cdots s_{\psi\phi_1\cdots \phi_{m-1},\phi_m}  \textrm{ where $\psi \in \Irr(P_i)$, $\boldsymbol{\phi}=(\phi_1,\ldots,\phi_m) \in \bigcup_{m=1}^\infty(\Irr(P_i)\setminus \{1\})^m$}.
\	\end{equation}
	To be more precise, we get a basis when we let $\boldsymbol{\phi}$ range over a transversal of
	\begin{equation}
		\bigcup_{m=1}^\infty(\Irr(P_i)\setminus \{1\})^m\big/\operatorname{Sym}_m,
	\end{equation}
	where, in addition, $(\phi_1,\ldots,\phi_m)$ must not involve any element of $\Irr(P_i)\setminus \{1\}$ more than $\ell-1$ times. Let $\mathcal I$ denote the set of all possible values for $\boldsymbol{\phi}$ which give rise to non-zero $s_{\psi, \boldsymbol \phi}$'s, and let $\mathcal I/\sim$ denote equivalence classes of $\boldsymbol \phi$'s that give rise to the same $s_{\psi, \boldsymbol \phi}$ (all of this is independent of $\psi$). We have $|\mathcal I/\sim|=\ell^{p-1}-1$, as $\mathcal I/\sim$ is naturally in bijection with maps $\Irr(P_1)\setminus \{1\}\longrightarrow\{0,1,\ldots,\ell-1\}$ which are not identically zero. 

\section{The algebra $B(\theta)_0$}\label{sec basic alg}
	
	We now need to distinguish between the two sets of generators for $A_1$ and $A_2$ introduced earlier. We will always do this implicitly though, and keep the notation from the previous section. Also, $\theta\in\Irr(Z)$ will denote a fixed faithful character in this section.
	
	\begin{defi} We define the $k$-algebra $B(\theta)_0=C_{B(\theta)}(kHe_\theta)$.
	\end{defi}	
	As with $B(\theta)$, we will usually denote $B(\theta)_0$ simply by $B_0$. Note that, by Lemma \ref{lem:simples},
	\begin{equation}
	|\Irr(H|\theta)|=|\Irr(E|\theta\otimes 1_{P_1}\otimes 1_{P_2})|=|\{(1,1)\}|=1
	\end{equation}
	and so $kHe_{\theta}\cong M_r(k)$. In particular,
	\begin{equation}
	B \cong B_0\otimes_k kHe_\theta\cong B_0\otimes_k M_r(k),
	\end{equation}
	where the first isomorphism is given by multiplication. Naturally this shows that $B$ and $B_0$ are Morita equivalent. Moreover, the dimensions of the simple $B_0$-modules are equal to the dimensions of the corresponding simple $B$-modules divided by $r$.
	 Therefore, since by Lemma~\ref{lem:simples} $\deg(\phi,\psi)=r^2$, for any $\phi\neq 1$ and $\psi\neq 1$, $B_0$ is not basic, but it is sufficiently small for our purposes.
	The structure of the algebra $B_0$ described in Definition~\ref{defi twisted tensor} and Proposition~\ref{prop emb ai}~(1)--(4) below is also known as a \emph{twisted tensor product} of $A_1$ and $A_2$, a notion originally introduced in \cite{TwistedTensorOrig} (see also \cite{TwistedTensorModern} for the special type of twisted tensor product that appears in our context).


	\begin{defi}\label{defi twisted tensor}
		\begin{enumerate}
			\item Define a linear map 
			\begin{equation}
				\pi = \pi_\theta:\ B_0 \longrightarrow A_1\otimes_k A_2
			\end{equation}
			as the restriction of the linear map $kG\xcdot e_\theta\longrightarrow k[(D_1\rtimes P_1)\times (D_2\rtimes P_2)]\cong A_1\otimes_k A_2$ which sends $n\xcdot h\xcdot e_{\theta}$ to $n$ for any $n\in (D_1\rtimes P_1)\times (D_2\rtimes P_2)$ and $h\in L_1\cdot L_2\subset H$ (note that $L_1\cdot L_2$ is not a group).
			\item For $i\in \{1,2\}$ let
			\begin{equation}
				A_i = \bigoplus_{\chi \in \Irr (L_i)} A_i^\chi
			\end{equation}
			be the decomposition of $A_i$ as an $L_i$-module into isotypical components, i.e. $a^g=\chi(g)\xcdot a$ whenever $a\in A_i^\chi$ and $g\in L_i$. We refer to the elements of 
			any one of the spaces $A_i^\chi$ as \emph{homogeneous}.
			\item For $i\in \{1,2\}$ define the linear map $\iota_i=\iota_{i,\theta}$ as follows:
			\begin{equation}
				\iota_i:\ A_i \longrightarrow B_0,\ a \mapsto a\xcdot h_{\chi,i}^{-1} \xcdot e_\theta \quad \textrm{for all $a\in A_i^\chi$ and $\chi\in \Irr(L_i)$.}
			\end{equation}
		\end{enumerate}
	\end{defi}

	\begin{remark}
		We will often use without further mention that $e_1	\in A_i^1$ for $i\in\{1,2\}$. Analogous statements are not true for the other idempotents.
	\end{remark}
	
	The next proposition summarises the properties of the maps $\pi$, $\iota_1$ and $\iota_2$, which relate the structure of $B_0$ to that of $A_1\otimes_k A_2$, which we understand completely by \S\ref{sec subalg}.  The $\iota_i$  turn out to be actual algebra homomorphisms. The map $\pi$ induces a bijection between $B_0$ and $A_1\otimes_k A_2$. And while $\pi$ is not an algebra isomorphism, it nevertheless shares some of the properties of an algebra isomorphism (e.g. point~\eqref{point radical} of the proposition below would be obvious if $\pi$ were a isomorphism). 	
	
	\begin{prop}\label{prop emb ai}
		\begin{enumerate}
		\item The map $\iota_i:\ A_i \hookrightarrow B_0$
		is a $k$-algebra homomorphism for $i\in\{1,2\}$.
		\item \label{point commutation}	For $\chi \in \Irr(L_1)$ and $\eta\in\Irr(L_2)$ we have
		\begin{equation}\label{eqn commutation new}
		\iota_1(a) \xcdot \iota_2(b) = \theta([h_{\eta,2}, h_{\chi,1}])\xcdot \iota_2(b) \xcdot \iota_1(a) \quad \textrm{for all $a\in A_1^\chi, b\in A_2^\eta$}.
		\end{equation}
		\item The map $\pi:\ B_0\longrightarrow A_1\otimes_k A_2$ is bijective. 
		\item\label{point formula pi}
		For all $a\in A_1$ and $b\in A_2$ we have 
		\begin{equation}\label{eqn pi prod homogeneous}
		\pi(\iota_1(a)\xcdot \iota_2(b)) =  a\otimes b.
		\end{equation}
		\item \label{point radical} For all $i\geq 1$ we have 
		\begin{equation}
		 \pi(J^i(B_0))=J^i(A_1\otimes_k A_2)=\sum_{j=1}^i J^j(A_1)\otimes_k J^{i-j}(A_2).
		 \end{equation}
		 \item\label{point ideals} Let $a_1,a_2\in A_1$ and $b_1,b_2 \in A_2$ be homogeneous elements. Then
		 \begin{equation}
		 	\pi( \iota_1(a_1)\iota_2(b_1) \xcdot B_0 \xcdot \iota_1(a_2)\iota_2(b_2)) = (a_1\otimes b_1) \xcdot (A_1\otimes_k A_2) \xcdot (a_2\otimes b_2).
		 \end{equation}
		\end{enumerate}
	\end{prop}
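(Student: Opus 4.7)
I would prove the six parts in the order (1), (2), (4), (3), (5), (6), since later parts build on earlier ones. The common thread is that every verification reduces to pushing elements past $e_\theta$ using two basic facts—$Z\subset Z(H)$, so $e_\theta$ is central in $kH$, and $Z$ acts trivially on $N = (D_1 \rtimes P_1)\times(D_2 \rtimes P_2)$, so $e_\theta$ commutes with $kN$—combined with the defining property of $h_{\chi,i}$ from Definition~\ref{def h chi}.

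For (1), I first check that $\iota_i(a) = a h_{\chi,i}^{-1} e_\theta \in B_0$: commutativity with $ze_\theta$ ($z \in Z$) is immediate; commutativity with $g e_\theta$ for $g \in L_{i'}$ ($i' \neq i$) is trivial since $g$ acts trivially on $a$ and commutes with $h_{\chi,i} \in L_{i'}$; commutativity with $g e_\theta$ for $g \in L_i$ holds because the central commutator $[h_{\chi,i}, g] \in Z$ contributes a factor $\chi(g)^{-1}$ via $\theta$, cancelling the scalar from $ga = \chi(g)^{-1}ag$. Multiplicativity $\iota_i(aa')=\iota_i(a)\iota_i(a')$ then follows from \eqref{eqn hchi multiplication}. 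For (2) and (4), I reduce to homogeneous $a \in A_1^\chi$, $b \in A_2^\eta$ and compute
\[
\iota_1(a)\iota_2(b) = ah_{\chi,1}^{-1}e_\theta\, bh_{\eta,2}^{-1}e_\theta = \eta(h_{\chi,1})\,\theta([h_{\chi,1},h_{\eta,2}])\cdot ab\cdot h_{\eta,2}^{-1}h_{\chi,1}^{-1}e_\theta,
\]
using $h_{\chi,1}^{-1}b = \eta(h_{\chi,1})bh_{\chi,1}^{-1}$ and $h_{\chi,1}^{-1}h_{\eta,2}^{-1} = h_{\eta,2}^{-1}h_{\chi,1}^{-1}[h_{\chi,1},h_{\eta,2}]$, the central commutator being absorbed by $\theta$. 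For (4), Definition~\ref{def h chi} forces $\eta(h_{\chi,1})\,\theta([h_{\chi,1},h_{\eta,2}])=1$, and $\pi$ strips the trailing $h_{\eta,2}^{-1}h_{\chi,1}^{-1}e_\theta$ since $h_{\eta,2}^{-1}h_{\chi,1}^{-1}\in L_1L_2$. For (2), the analogous computation of $\iota_2(b)\iota_1(a)$ gives $\chi(h_{\eta,2})\cdot abh_{\eta,2}^{-1}h_{\chi,1}^{-1}e_\theta$, and Definition~\ref{def h chi} then produces the required ratio $\theta([h_{\eta,2},h_{\chi,1}])$. Part (3) follows from (4) (giving surjectivity) together with $\dim B_0 = \dim(A_1 \otimes_k A_2) = |N|$, which comes from $B \cong B_0 \otimes_k M_r(k)$.

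For (5), set $\tilde J = \iota_1(J(A_1))\iota_2(A_2) + \iota_1(A_1)\iota_2(J(A_2))$. By (2), $\tilde J$ is a two-sided ideal of $B_0$, and an inductive expansion yields $\tilde J^i = \sum_{j+k=i}\iota_1(J^j(A_1))\iota_2(J^k(A_2))$, which is nilpotent, so $\tilde J \subseteq J(B_0)$; by (4), $\pi(\tilde J^i) = \sum_{j+k=i}J^j(A_1)\otimes_k J^k(A_2) = J^i(A_1 \otimes_k A_2)$. For the reverse inclusion, compare $\dim(B_0/\tilde J) = \dim\bar A_1\cdot\dim\bar A_2 = p^2$ with $\dim(B_0/J(B_0))$; from Lemma~\ref{lem:simples} and $B \cong B_0 \otimes_k M_r(k)$, the simple $B_0$-modules have dimensions $1,1,1,r$ with multiplicities $1,p-1,p-1,((p-1)/r)^2$, whose squared sum is $p^2$. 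Hence $\tilde J^i = J^i(B_0)$. For (6), I expand $B_0 = \sum \iota_1(a)\iota_2(b)$ over homogeneous $a,b$: parts (1) and (2) reduce each product $\iota_1(a_1)\iota_2(b_1)\iota_1(a)\iota_2(b)\iota_1(a_2)\iota_2(b_2)$ to a nonzero scalar times $\iota_1(a_1aa_2)\iota_2(b_1bb_2)$, which by (4) maps under $\pi$ to a nonzero scalar times $(a_1aa_2)\otimes(b_1bb_2)=(a_1\otimes b_1)(a\otimes b)(a_2\otimes b_2)$. As the scalars are nonzero, the full span $(a_1\otimes b_1)(A_1\otimes_k A_2)(a_2\otimes b_2)$ is attained.

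The main obstacle is part (5): since $\pi$ is only $k$-linear and not an algebra map, the equality $\pi(J^i(B_0)) = J^i(A_1\otimes A_2)$ is not formal, and one must invoke the representation-theoretic data of $B$ from Lemma~\ref{lem:simples} via a dimension count.
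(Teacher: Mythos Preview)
Your argument is correct. Parts (1)--(4) and (6) proceed by essentially the same direct computations as the paper (you additionally verify that $\iota_i(a)\in B_0$, which the paper leaves implicit). The only substantive difference is in part (5). The paper observes that $\pi$ is the restriction of a left $k[D_1\times D_2]$-module map $\hat\pi:\ kGe_\theta\to A_1\otimes_k A_2$; since $D=D_1\times D_2$ is a normal Sylow $\ell$-subgroup one has $J^i(kGe_\theta)=J^i(kD)\,kGe_\theta$ and likewise for $A_1\otimes_k A_2$, so $\hat\pi$ carries radical powers to radical powers, and then $J^i(kGe_\theta)=J^i(B_0)\cdot kHe_\theta$ finishes the identification. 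Your route instead constructs the candidate ideal $\tilde J=\iota_1(J(A_1))\iota_2(A_2)+\iota_1(A_1)\iota_2(J(A_2))$, uses (2) and (4) to show $\pi(\tilde J^i)=J^i(A_1\otimes_k A_2)$, and then pins down $\tilde J=J(B_0)$ by a dimension count of $B_0/J(B_0)$ from Lemma~\ref{lem:simples}. The paper's argument is shorter and avoids invoking the classification of simples, while yours stays entirely inside the twisted-tensor-product description and makes the equality $\tilde J=J(B_0)$ explicit; both are clean and valid.
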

	\begin{proof}
		\begin{enumerate}
		\item For $a\in A_i^\chi$ and $b\in A_i^\eta$, we have 
		 \begin{equation}
		 \iota_i(a)\xcdot\iota_i(b)= a\xcdot h_{\chi,i}^{-1} \xcdot e_{\theta} \xcdot b \xcdot h_{\eta,i}^{-1} \xcdot e_\theta = ab\xcdot h_{\chi,i}^{-1}\xcdot h_{\eta,i}^{-1} \xcdot e_\theta = ab\xcdot h_{\chi\eta,i}^{-1} \xcdot e_\theta = \iota_i(ab), 	
		 \end{equation}
		using that $h_{\chi,i} h_{\eta,i}=h_{\chi\eta,i}$, as we saw earlier. The above shows that $\iota_i$ is a $k$-algebra homomorphism, since the various $A_i^\chi$ span $A_i$. 	
		\item We have
		\begin{equation}
	\begin{array}{rcl}
\iota_1(a)\xcdot\iota_2(b)&=& a\xcdot h_{\chi,1}^{-1} \xcdot e_{\theta} \xcdot b \xcdot h_{\eta,2}^{-1} \xcdot e_\theta = a\xcdot b^{h_{\chi,1}}\xcdot h_{\chi,1}^{-1}\xcdot h_{\eta,2}^{-1} \xcdot e_\theta\\ &=& 
\eta({h_{\chi,1}})\xcdot  b\xcdot a\xcdot h_{\eta,2}^{-1}\xcdot h_{\chi,1}^{-1}\xcdot  \theta([h_{\eta,2},h_{\chi,1}^{-1}]) \xcdot e_\theta=  b\xcdot a\xcdot h_{\eta,2}^{-1}\xcdot h_{\chi,1}^{-1} \xcdot e_\theta\\
&=&  \chi(h_{\eta,2}^{-1})\xcdot b\xcdot h_{\eta,2}^{-1} \xcdot a \xcdot  h_{\chi,1}^{-1} \xcdot e_\theta = \theta([h_{\eta,2}, h_{\chi,1}])\xcdot \iota_2(b)\xcdot \iota_1(a).
\end{array}
\end{equation}
		\item Surjectivity of $\pi$ will follow immediately from point \eqref{point formula pi} below. For injectivity we compare dimensions. The image of $\pi$ has dimension $\dim(A_1)\xcdot \dim(A_2)=|D_1|^2\xcdot |P_1|^2$.  On the other hand ${\dim(C_{B(\theta)}(H))\xcdot \dim(kHe_\theta)= \dim(C_{B(\theta)}(H))\xcdot r^2=\dim(B)=|D_1|^2|P_1|^2r^2}$, which shows that $\dim(B_0)=\dim(C_{B(\theta)}(H))= |D_1|^2|P_1|^2$, which is the same as the dimension of the image of $\pi$.
		\item It suffices to check formula~\eqref{eqn pi prod homogeneous} for $a$ and $b$ homogeneous. So assume $a\in A_1^\chi$ and $b\in A_2^\eta$. As in the proof of \eqref{point commutation} we have		
		$
		\iota_1(a)\xcdot\iota_2(b)
		= a\xcdot b\xcdot h_{\eta,2}^{-1} \xcdot h_{\chi,1}^{-1}\xcdot e_\theta,
		$
		Since, slightly counter-intuitively, $h_{\chi,1}\in L_2$ and $h_{\eta,2}\in L_1$, we have that $h_{\eta,2}^{-1} \xcdot h_{\chi,1}^{-1}\in L_1\xcdot L_2$.
		Therefore, by definition, $\pi$ maps the above element to $a\otimes b$.	
		\item Recall that $\pi$ was defined as the restriction of a linear map $\hat \pi:\ kG\xcdot e_\theta\longrightarrow k[(D_1\rtimes P_1)\times (D_2\rtimes P_2)]\cong A_1\otimes_k A_2$ which is a homomorphism of left $k[D_1\times D_2]$-modules. In particular, $\hat \pi$ will map $J^i(kG\xcdot e_\theta)$ onto $J^i(A_1\otimes A_2)$ for all $i\geq 0$. 
		This uses that $D_1\times D_2$ is a normal Sylow $\ell$-subgroup of $G$, and therefore $J(kG)=J(k[D_1\times D_2])\xcdot kG$ and an analogous expression for $J(A_1\otimes A_2)$.
		Now $J^i(kGe_\theta) = J^i(B_0)\xcdot kHe_\theta$, and by the definition of $\hat \pi$ we have 
		$\hat \pi(J^i(B_0)\xcdot kHe_\theta) = \pi(J^i(B_0))$, which proves the claim. 
		\item It suffices to check that 
		\begin{equation}
\pi( \iota_1(a_1)\iota_2(b_1) \xcdot (\iota_1(A_1^\chi)\iota_2(A_2^\eta)) \xcdot \iota_1(a_2)\iota_2(b_2)) = (a_1\otimes b_1) \xcdot (A_1^\chi\otimes_k A_2^\eta )\xcdot (a_2\otimes b_2)
		\end{equation}
		for all $\chi\in \Irr(L_1)$ and $\eta\in\Irr(L_2)$.
		However, by formula~\eqref{eqn commutation new}, the relevant factors in the argument of $\pi$ above commute up to a non-zero scalar (which does not affect the image).  Hence the left hand side of the above is equal to
		$
				\pi(\iota_1(a_1A_1^\chi a_2)\xcdot \iota_2(b_1A_2^\eta b_2)),
		$ 
		which equals the right hand side of the above by point~\eqref{point formula pi}.
		\qedhere
		\end{enumerate}
	\end{proof}

	\begin{prop}[Explicit formula for $\iota_i$]\label{prop formula iota}
		For $\{i,j\}=\{1,2\}$ we have
		\begin{equation}\label{eqn sjjsjjssj}
			\iota_i(a) = \sum_{g\in L_i} (a\xcdot e_{1_{L_j}} \xcdot e_\theta)^g\quad \textrm{for all $a\in A_i=k[D_i\rtimes P_i]$.}
		\end{equation}
	\end{prop}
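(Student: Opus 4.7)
Both sides of \eqref{eqn sjjsjjssj} are $k$-linear in $a$, and $A_i=\bigoplus_{\chi\in\Irr(L_i)}A_i^\chi$, so it suffices to verify the identity for a homogeneous element $a\in A_i^\chi$ with $\chi\in\Irr(L_i)$ fixed. In this case, by the definition of $\iota_i$, the claim becomes
\begin{equation}
\sum_{g\in L_i}(a\cdot e_{1_{L_j}}\cdot e_\theta)^g\;=\;a\cdot h_{\chi,i}^{-1}\cdot e_\theta.
\end{equation}
Using $a^g=\chi(g)\,a$ (homogeneity), $e_\theta^g=e_\theta$ (since $L_i$ centralises $Z\subseteq Z(H)$), and the fact that $a\in k[D_i\rtimes P_i]$ commutes with every element of $k[L_j\cdot Z]$ (because $L_j$ and $Z$ act trivially on $D_i\rtimes P_i$), I would pull $a$ out to the left. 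This reduces everything to the purely group-algebra identity
\begin{equation}
\sum_{g\in L_i}\chi(g)\,(e_{1_{L_j}})^g\cdot e_\theta\;=\;h_{\chi,i}^{-1}\cdot e_\theta.
\end{equation}

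The heart of the computation is to evaluate the sum on the left. I would expand $e_{1_{L_j}}=\tfrac{1}{r}\sum_{h\in L_j}h$ and use $[H,H]\subseteq Z$ to write $h^g=h\cdot[h^{-1},g^{-1}]$ with $[h^{-1},g^{-1}]\in Z$. Multiplying by $e_\theta$ replaces this central commutator by the scalar $\theta([h^{-1},g^{-1}])$, and after swapping the order of summation the left hand side becomes
\begin{equation}
\frac{1}{r}\sum_{h\in L_j}\Bigl(\sum_{g\in L_i}\chi(g)\,\theta([h^{-1},g^{-1}])\Bigr)\,h\cdot e_\theta.
\end{equation}
Because $[\,\cdot\,,\,\cdot\,]\colon H\times H\to Z$ is bimultiplicative modulo a central (hence inert) factor, the map $g\mapsto\theta([h^{-1},g^{-1}])$ is a genuine character of $L_i$; standard orthogonality of $\Irr(L_i)$ then shows that the inner sum equals $r$ when this character equals $\chi^{-1}$ and vanishes otherwise.

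To finish, I would identify the unique surviving $h\in L_j$. After substituting $g\mapsto g^{-1}$, the condition $\chi_h=\chi^{-1}$ becomes $\theta([h^{-1},g])=\chi(g)$ for all $g\in L_i$; comparing with the defining equation $\chi(g)=\theta([h_{\chi,i},g])$ in Definition~\ref{def h chi} and invoking the uniqueness of $h_{\chi,i}$ in $L_j$ forces $h=h_{\chi,i}^{-1}$. Only this single term survives, the prefactor $\tfrac{1}{r}\cdot r$ cancels, and the expression collapses to $h_{\chi,i}^{-1}\cdot e_\theta$, as required. The only real obstacle is bookkeeping with commutator orientations and character inverses; no structural subtlety arises.
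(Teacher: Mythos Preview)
Your argument is correct and follows essentially the same route as the paper: reduce to homogeneous $a\in A_i^\chi$, expand $e_{1_{L_j}}$, replace conjugates by commutator scalars via $e_\theta$, and apply character orthogonality on $L_i$ to isolate a single term. The only cosmetic difference is that the paper parametrises $L_j$ by $\Irr(L_i)$ through the bijection $\eta\mapsto h_{\eta,i}$ (writing $e_{1_{L_j}}=r^{-1}\sum_{\eta}h_{\eta,i}$) and then checks the identity after applying the linear bijection $\pi$, whereas you sum over $L_j$ directly and compare both sides in $kG\,e_\theta$ without invoking $\pi$; your version is arguably a little more direct.
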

	\begin{proof}
		Assume without loss of generality that $i=1$ and $j=2$.
		It suffices to prove this for $a\in A_1^\chi$ for a fixed $\chi\in \Irr(L_1)$. Note the idempotent $e_{1_{L_2}}$ can also be written as $r^{-1}\xcdot \sum_{\eta\in \Irr(L_1)} h_{\eta,1}$. Now by character orthogonality (also using $h_{\eta,1} =[h_{\eta,1}, g^{-1}] \xcdot  h_{\eta,1}^g$),
		\begin{equation}
			\pi\left(\sum_{g\in L_1} a^g\xcdot h_{\eta,1}^g \xcdot e_{\theta}\right)= \pi\left(\sum_{g\in L_1} \chi(g)\eta(g)\xcdot a \xcdot h_{\eta,1} \xcdot e_\theta\right) = \left\{\begin{array}{ll} 0 & \textrm{if $\chi\neq \eta^{-1}$}\\ r\xcdot a\otimes 1 & \textrm{if $\chi = \eta^{-1}$} \end{array},\right.
		\end{equation}
		for all $\eta\in\Irr(L_1)$. As $\pi(\iota_1(a))= a\otimes 1$, summing over all such $\eta$ gives that $\pi$ applied to both sides of \eqref{eqn sjjsjjssj} holds true. Since $\pi$ is bijective, the result follows.
	\end{proof}	
	
	By the above, the algebra $B_0=B(\theta)_0$ can be thought of as being graded by $\Irr(P_1)\times \Irr(P_2)$, and the character $\theta$ determines how the homogeneous components commute through equation~\eqref{eqn commutation new}. We will ultimately recover $\theta$ from $B_0$ by showing that certain subspaces of the homogeneous components are preserved under isomorphisms modulo $J^3(B_0)$. However, we will proceed in a more elementary way, and for that we will need idempotents and certain arrows explicitly.  

\begin{defi} \label{defi eps s}
	\begin{enumerate}
	\item For $\phi \in \Irr(P_1),\psi\in\Irr(P_2)$ set
	\begin{equation}
	\eps_{(\phi,1)}=\iota_1(e_{\phi})\iota_2(e_1)\quad\textrm{and}\quad 
	\eps_{(1,\psi)}=\iota_1(e_1)\iota_2(e_\psi).
	\end{equation}
	For $\phi\in\Irr(P_1)\setminus\{1\}, \psi\in \Irr(P_2)\setminus\{1\}$
	set
	\begin{equation}
	\eps_{(\phi,\psi)} =\iota_1(e_{[\phi]}) \iota_2(e_{[\psi]}),\quad\textrm{where}\ e_{[\psi]}= \sum_{g\in L_i} e_{\psi^g}\in A_i^1.
	\end{equation}
	\item For $\psi,\phi\in \Irr(P_1)$, $\xi,\zeta\in \Irr(P_2)$ such that $\phi\neq 1$, $\zeta\neq 1$ set
	\begin{equation}
		S_{\psi,\phi} = \iota_1(s_{\psi,\phi})\iota_2(e_1) \quad \textrm{ and }\quad T_{\xi,\zeta} = \iota_1(e_1)\iota_2(s_{\xi,\zeta}).
	\end{equation}
	\item For $1\neq \phi\in \Irr(P_1)$, $1\neq \zeta\in \Irr(P_2)$, $\chi\in \Irr(L_1)$ and $\eta\in \Irr(L_2)$ set
	\begin{equation}
		\tilde S_\phi^\chi = \sum_{g\in L_1} \chi(g^{-1}) S_{1,\phi^g}S_{\phi^g, (\phi^{-1})^g}\quad\textrm{and}\quad
		\tilde T_\zeta^\eta = \sum_{g\in L_2} \eta(g^{-1}) T_{1,\zeta^g}T_{\zeta^g, (\zeta^{-1})^g}.
	\end{equation}
	\end{enumerate}
\end{defi}

	\begin{remark}\label{remark iota hom}
	Note that by Proposition~\ref{prop emb ai}~\eqref{point commutation}, for $\{i,j\}=\{1,2\}$, the element $\iota_j(e_1)$, and more generally every element of $\iota_j(A_j^1)$, commutes with every element of $\iota_i(A_i)$. 
	It follows that the $\eps_{(\psi,\phi)}$ are idempotents, and 
	\begin{equation}\label{eqn s arrows}
	S_{\psi,\phi} = \eps_{(\psi,1)}\xcdot S_{\psi,\phi}\xcdot \eps_{(\psi\phi,1)} \quad\textrm{and}\quad  	
	T_{\xi,\zeta} = \eps_{(\xi,1)}\xcdot T_{\xi,\zeta}\xcdot \eps_{(\xi\zeta,1)}.		
	\end{equation} 
	Moreover, it follows that 
	\begin{equation}
	S_{\psi_1,\phi_1}\cdots S_{\psi_m,\phi_m} = \iota_1(s_{\psi_1,\phi_1}\cdots s_{\psi_m,\phi_m})\iota_2(e_1)\quad\textrm{	for any $m> 0$,}
	\end{equation}
	\begin{equation}
		S_{\psi_1,\phi_1}\cdots S_{\psi_m,\phi_m} T_{\xi_1,\zeta_1}\cdots T_{\xi_n,\zeta_n} =
		\iota_1(s_{\psi_1,\phi_1}\cdots s_{\psi_m,\phi_m}e_1)\iota_2(e_1s_{\xi_1,\zeta_1}\cdots s_{\xi_n,\zeta_n})\quad\textrm{for any $m,n> 0$,}
	\end{equation}
	for any admissible choice of $\psi_i$, $\phi_i$, $\xi_i$ and $\zeta_i$. This will be useful since the image of the right hand side under $\pi$ can readily be determined using Proposition~\ref{prop emb ai}~\eqref{point formula pi}.
\end{remark}

\begin{prop}\label{prop idempotents}
		If $(\psi,\phi)$ and $(\psi',\phi')$ label two different Brauer characters of $B$ (using the notation of \S\ref{sec simples}), then $\eps_{(\phi,\psi)}$ and $\eps_{(\phi',\psi')}$  are distinct orthogonal idempotents. The idempotent $\eps_{(\phi,\psi)}$ annihilates all simple $B_0$-modules except for the one corresponding to the character $(\phi,\psi)$. In particular, the idempotents $\eps_{(\psi,1)}$ and $\eps_{(1,\phi)}$ are primitive in $B_0$ for any $\psi\in\Irr(P_1), \phi\in\Irr(P_2)$.
\end{prop}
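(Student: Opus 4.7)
The plan is to establish idempotency/orthogonality, annihilation, and primitivity in turn, leveraging the commutation relation of Proposition~\ref{prop emb ai}~\eqref{point commutation} together with explicit analysis of the simple modules $V_{(\phi',\psi')}$ from Section~\ref{sec simples}.

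For idempotency and orthogonality, I first observe that in each form of $\eps_{(\phi,\psi)}$ at least one of the two $\iota_j$-factors lies in $\iota_j(A_j^1)$: this is immediate for $\iota_1(e_1)$ and $\iota_2(e_1)$, and it holds for $\iota_i(e_{[\phi]})$ because $e_{[\phi]}=\sum_{g\in L_i}e_{\phi^g}$ is $L_i$-invariant. By Proposition~\ref{prop emb ai}~\eqref{point commutation}, with $h_{1_{L_j},j}=1$ making the scalar trivial, such factors commute with every element of $\iota_i(A_i)$. Since $\iota_1,\iota_2$ are algebra homomorphisms and $e_\phi,e_{[\phi]},e_1$ are idempotents in $A_i$, this yields $\eps_{(\phi,\psi)}^2=\eps_{(\phi,\psi)}$. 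For orthogonality of distinct Brauer labels (per Lemma~\ref{lem:simples}), commuting the trivial-isotypic factors past one another reduces $\eps_{(\phi,\psi)}\eps_{(\phi',\psi')}$ to an expression $\iota_1(f_1)\iota_2(f_2)$ in which at least one factor vanishes, either by orthogonality of distinct $e_\bullet \in A_i$ or by $e_{[\phi]}e_{[\phi']}=0$ when the $L_i$-orbits of the labels differ.

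By the Morita equivalence $B\cong B_0\otimes_k M_r(k)$, annihilation reduces to showing $\eps_{(\phi,\psi)}V_{(\phi',\psi')}=0$ for non-equivalent Brauer labels. A Mackey decomposition exhibits $V_{(\phi',\psi')}|_{P_1\times P_2}$ as the direct sum of the one-dimensional characters $\phi'^{g_1^m}\otimes\psi'^{g_2^n}$; hence $e_1 \in kP_i$ annihilates $V_{(\phi',\psi')}$ unless the $i$-th label of $(\phi',\psi')$ is trivial, and $e_{[\phi'']}$ annihilates it unless $\phi''\sim_{L_i}\phi'$. Since $e_1,e_{[\phi]}\in A_i^1$, the direct definition of $\iota_i$ simplifies to $\iota_i(e_1)=e_1\xcdot e_\theta$ and $\iota_i(e_{[\phi]})=e_{[\phi]}\xcdot e_\theta$, which act on $V$ as the corresponding projections. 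This disposes of every case except $\eps_{(\phi,1)}V_{(\phi',1)}$ for distinct nontrivial $\phi,\phi'$ (and its mirror $\eps_{(1,\psi)}V_{(1,\psi')}$), which is the main obstacle.

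To address this, I would expand $e_\phi=\sum_\chi(e_\phi)_\chi$ into $L_1$-isotypic components in $A_1$ so that $\iota_1(e_\phi)=\sum_\chi(e_\phi)_\chi h_{\chi,1}^{-1}e_\theta$, and realise $V_{(\phi',1)}=\bigoplus_{m=0}^{r-1}k\cdot v_m$ with $v_m=g_1^m\otimes w$ arising from the induction from $Z\times P_1\times(P_2\rtimes L_2)$. Writing $\omega=\theta(g_z)$ and $h_{\chi,1}=g_2^{a(\chi)}$, the Heisenberg relation $[g_1,g_2]=g_z$ yields $g_2v_m=\omega^{-m}v_m$, while Definition~\ref{def h chi} gives $\chi(g_1^n)=\omega^{-a(\chi)n}$. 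Substituting, the action of $\iota_1(e_\phi)$ on $v_m$ collapses to the scalar $\tfrac{1}{r}\sum_\chi\omega^{-a(\chi)k}$, where $\phi=(\phi')^{g_1^k}$; character orthogonality on $L_1$ makes this vanish unless $k\equiv 0$, i.e.\ $\phi=\phi'$. Primitivity now follows immediately: since $\eps_{(\phi,1)}$ acts non-trivially only on $S_{(\phi,1)}$, whose $B_0$-dimension equals $\deg(\phi,1)/r=1$ by Lemma~\ref{lem:simples}, the space $\eps_{(\phi,1)}S_{(\phi,1)}$ is one-dimensional, the projective $B_0\eps_{(\phi,1)}$ has simple head $S_{(\phi,1)}$, is therefore indecomposable, and $\eps_{(\phi,1)}$ is primitive. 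The case of $\eps_{(1,\psi)}$ is symmetric via Proposition~\ref{prop isoms}(2).
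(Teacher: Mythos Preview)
Your argument is correct in substance, with one small omission: the case $\eps_{(\phi,1)}V_{(1,1)}$ for $\phi\neq 1$ (and its mirror) is not actually covered by your ``easy'' reduction, since neither factor of $\eps_{(\phi,1)}=\iota_1(e_\phi)\iota_2(e_1)$ is of the form $\iota_i(e_1)$ or $\iota_i(e_{[\bullet]})$ on the side where it would help. This is harmless, as every $e_{\phi^g}$ with $\phi\neq 1$ already annihilates $V_{(1,1)}$ (whose $P_1$-action is trivial), so $\iota_1(e_\phi)$ kills it before your scalar formula is reached; the same remark applies when $\phi\not\sim_{L_1}\phi'$, where no $k$ with $\phi=(\phi')^{g_1^k}$ exists.

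Your route differs markedly from the paper's. The paper computes no module actions at all: instead it applies Proposition~\ref{prop formula iota} to each $\eps_{(\phi,\psi)}$ and recognises the resulting expression directly as the central-primitive idempotent of $kE$ attached to the irreducible character $(\phi,\psi)$ of~\eqref{algn:lab_char}. For instance one finds $\eps_{(\phi,1)}=\sum_{g\in L_1}(e_\phi\, e_{1_{P_2\rtimes L_2}}\, e_\theta)^g$, visibly the block idempotent for the induced character $(\theta\otimes\phi\otimes 1_{P_2\rtimes L_2})\uparrow^E$. Orthogonality, distinctness and the annihilation statement then follow at once from these being central-primitive idempotents for distinct simples of $kE$, with no Mackey decomposition or Heisenberg commutator arithmetic required. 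Your approach has the virtue of being independent of Proposition~\ref{prop formula iota}, but at the cost of essentially rederiving, via an explicit calculation on the induced basis $v_m$, the very fact that $\iota_1(e_\phi)$ acts on $V_{(\phi',1)}$ exactly as the central idempotent of $kE$ for $(\phi,1)$ would.
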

\begin{proof}
			Let $\phi\in\Irr(P_1)$ and $\psi\in\Irr(P_2)$. Using Proposition~\ref{prop formula iota}  and the fact that $\iota_2(e_1)=e_{1_{P_2}}$ (immediately from the definition~of~$\iota_2$) we get 
	\begin{equation}
	\eps_{(\phi,1)} = \iota_1(e_\phi) \xcdot \iota_2(e_1)=
	\sum_{g\in L_1} (e_\phi \xcdot e_{1_{L_2}} \xcdot e_{\theta})^g \xcdot e_{1_{P_2}} = \sum_{g\in L_1} (e_\phi \xcdot e_{1_{P_2\rtimes L_2}} \xcdot e_{\theta})^g.
	\end{equation}	
	The idempotent on the right hand side is clearly the central-primitive idempotent in $kE$ which belongs to the induced module given in equation~\eqref{algn:lab_char}. That is, $\eps_{(\phi,1)}$ is the idempotent attached to the character $(\phi,1)$.  By the same argument $\eps_{(1,\psi)}$ belongs to the character $(1,\psi)$. If $\phi\neq 1$, $\psi\neq 1$ then, using only the definition of $\iota_i$ this time,
	\begin{equation}
	\eps_{(\phi,\psi)}=\iota_1(e_{[\phi]})\xcdot \iota_2(e_{[\psi]}) = \sum_{g\in L_1}e_{\phi^g}\xcdot e_\theta \xcdot \sum_{h\in L_2}  e_{\psi^h}  \xcdot e_\theta = \sum_{g\in L_1} \sum_{h\in L_2} (e_{\phi} \xcdot e_{\psi}\xcdot e_\theta)^{gh},
	\end{equation}
	which is clearly the central-primitive idempotent in $kE$ which belongs to the character $(\phi,\psi)$. 
	The distinctness and orthogonality of the various $\eps_{(\phi,\psi)}$'s follows immediately from the fact that these are central-primitive idempotents belonging to distinct simple modules (albeit in the algebra $kE$). As each simple $kE$-module gives rise to a simple $kG$-module, whose restriction to $B_0$ is a direct sum of copies of a single simple module (since $B_0$ and $B$ are naturally Morita equivalent), the claim regarding the action of these idempotents on the simple $B_0$-modules follows as well. It is also clear that the $\eps_{(\phi,1)}$ and $\eps_{(1,\psi)}$ are primitive, as the corresponding simple $B_0$-modules are one-dimensional. 
\end{proof}

\begin{lemma}\label{lemma arrow properties}
	 \begin{enumerate}
		\item\label{point ext} For all $\psi,\phi\in\Irr(P_1)$ and $\xi,\zeta\in\Irr(P_2	)$ with $\phi\neq 1$ and $\zeta\neq 1$,
		\begin{eqnarray}
		\eps_{(\psi,1)}(J(B_0)/J^2(B_0))\eps_{(\psi\phi,1)}&=&\langle S_{\psi,\phi}\rangle_k+J^2(B_0),\\
		\eps_{(1,\xi)}(J(B_0)/J^2(B_0))\eps_{(1,\xi\zeta)}&=&\langle T_{\xi,\zeta}\rangle_k+J^2(B_0),
		\end{eqnarray}
		and all of these spaces are one-dimensional. That is, the $S_{\psi,\phi}$ and $T_{\xi,\zeta}$ correspond to arrows in the quiver of $B_0$. Moreover,
		\begin{equation}
			\eps_{(\psi,1)}(J(B_0)/J^2(B_0))\eps_{(\psi,1)} =0
		\quad \textrm{and}\quad
		\eps_{(1,\xi)}(J(B_0)/J^2(B_0))\eps_{(1,\xi)}=0.
		\end{equation}
		\item\label{point all psi equal} Let $\phi_1,\dots,\phi_\ell\in\Irr(P_1)\setminus\{1\}$ such that $\phi_1\xcdot\ldots\xcdot\phi_\ell=1$. Then
		\begin{equation}
		S_{1,\phi_1}S_{\phi_1,\phi_2}\cdots S_{\prod_{i=1}^{\ell-1}\phi_i,\phi_\ell}\in J^{\ell+1}(B_0)
		\end{equation}
		if and only if $\phi_1=\dots=\phi_\ell$. An analogous statement holds for the $T_{\xi,\zeta}$'s.
		\item\label{point lin indep} The sets
			\begin{equation}
			\left\{S_{1,\phi}S_{\phi,\phi^{-1}}\right\}_{\phi\in\Irr(P_1)\setminus\{1\}} \quad\textrm{and}\quad 	\left\{S_{1,\phi}S_{\phi,\phi^{-1}}T_{1,\zeta}T_{\zeta,\zeta^{-1}}\right\}_{\phi\in\Irr(P_1)\setminus\{1\}, \zeta\in\Irr(P_2)\setminus\{1\}}
			\end{equation}
			are linearly independent modulo $J^3(B_0)$ and  $J^5(B_0)$, respectively. An analogous statement to the first one holds for the $T_{\xi,\zeta}$'s. 
		\item\label{point schi comm} For any $1\neq \phi\in \Irr(P_1)$, $1\neq \zeta\in \Irr(P_2)$, $\chi\in \Irr(L_1)$ and $\eta\in \Irr(L_2)$ 
		\begin{equation}
			\tilde S^\chi_{\phi} \tilde T^\eta_{\zeta} = \theta([h_{\eta,2}, h_{\chi,1}]) \xcdot \tilde T^\eta_{\zeta} \tilde S^\chi_{\phi}.
		\end{equation}
		\end{enumerate}
\end{lemma}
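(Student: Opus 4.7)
The central idea is to transfer every calculation to the tensor algebra $A_1\otimes_k A_2$ via the linear bijection $\pi$. The three main tools are: Proposition~\ref{prop emb ai}\eqref{point radical}, which says $\pi$ preserves the radical filtration; Proposition~\ref{prop emb ai}\eqref{point ideals}, which lets us transport two-sided ideal pieces; and Remark~\ref{remark iota hom}, which writes any word in the $S$'s and $T$'s as $\iota_1(s)\iota_2(t)$ whose $\pi$-image is the pure tensor $s\otimes t$. Once moved to $A_1\otimes_k A_2$, the quiver presentation of~\S\ref{sec subalg} and the explicit basis of $J(A_i)$ make everything a bookkeeping exercise.

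For~\eqref{point ext}, applying $\pi$ to $\eps_{(\psi,1)}(J(B_0)/J^2(B_0))\eps_{(\psi\phi,1)}$ and reducing modulo $J^2(A_1\otimes_k A_2)$ yields
\begin{equation}
e_\psi J(A_1)e_{\psi\phi}\otimes e_1 A_2 e_1 \; + \; e_\psi A_1 e_{\psi\phi}\otimes e_1 J(A_2) e_1.
\end{equation}
Because no arrow of $A_i$ is a self-loop we have $e_1 J(A_i)e_1\subseteq J^2(A_i)$, so the second summand contributes nothing and the first is spanned by $s_{\psi,\phi}\otimes e_1 = \pi(S_{\psi,\phi})$; the same observation with $\psi\phi$ replaced by $\psi$ gives the vanishing of the self-loop spaces. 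For~\eqref{point all psi equal}, Remark~\ref{remark iota hom} identifies the product with $\iota_1(s_{1,\phi_1}\cdots s_{\phi_1\cdots\phi_{\ell-1},\phi_\ell})\iota_2(e_1)$, so via $\pi$ the question reduces to whether the $A_1$-factor lies in $J^{\ell+1}(A_1)$; using the symmetry relation to reorder the arrows, together with~\eqref{eqn l power relation}, shows this happens precisely when all $\phi_i$ are equal. For~\eqref{point lin indep}, the tensors $s_{1,\phi}s_{\phi,\phi^{-1}}\otimes e_1$ (respectively $s_{1,\phi}s_{\phi,\phi^{-1}}\otimes s_{1,\zeta}s_{\zeta,\zeta^{-1}}$) are distinguished basis vectors of the relevant degree-$2$ (respectively degree-$4$) pieces of $A_1\otimes_k A_2$ described in~\S\ref{sec subalg}, so linear independence upstairs transfers back through the bijection $\pi$.

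For part~\eqref{point schi comm} the main task is to check that $\alpha := \sum_{g\in L_1}\chi(g^{-1}) s_{1,\phi^g}s_{\phi^g,(\phi^{-1})^g}$ lies in $A_1^\chi$, which follows from $(s_{\psi,\phi})^h = s_{\psi^h,\phi^h}$ and a reindexing $g\mapsto gh^{-1}$ that produces the scalar $\chi(h)$. By symmetry, $\beta := \sum_{g\in L_2}\eta(g^{-1}) s_{1,\zeta^g}s_{\zeta^g,(\zeta^{-1})^g}$ lies in $A_2^\eta$. Both $\alpha$ and $\beta$ begin and end at $e_1$, so repeated use of Remark~\ref{remark iota hom} and the fact that $\iota_j(A_j^1)$ commutes with $\iota_i(A_i)$ yields $\tilde S^\chi_\phi \tilde T^\eta_\zeta = \iota_1(\alpha)\iota_2(\beta)$ and $\tilde T^\eta_\zeta \tilde S^\chi_\phi = \iota_2(\beta)\iota_1(\alpha)$. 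A single application of Proposition~\ref{prop emb ai}\eqref{point commutation} to swap these two factors produces the required scalar $\theta([h_{\eta,2}, h_{\chi,1}])$. The main technical care needed is keeping track of which factors are homogeneous and of which type, since otherwise the commutation formula~\eqref{eqn commutation new} is easy to misapply.
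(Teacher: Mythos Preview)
Your arguments for parts~\eqref{point all psi equal}, \eqref{point lin indep} and \eqref{point schi comm} are essentially those of the paper: transfer to $A_1\otimes_k A_2$ via $\pi$, use the explicit basis from \S\ref{sec subalg}, and for \eqref{point schi comm} verify that the relevant element lies in $A_1^\chi$ before invoking the commutation formula. Nothing to add there.

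In part~\eqref{point ext}, however, there is a genuine gap. You apply Proposition~\ref{prop emb ai}\eqref{point ideals} to $\eps_{(\psi,1)}=\iota_1(e_\psi)\iota_2(e_1)$ and $\eps_{(\psi\phi,1)}=\iota_1(e_{\psi\phi})\iota_2(e_1)$ to obtain the displayed formula for the $\pi$-image. But that proposition requires the elements $a_1,a_2,b_1,b_2$ to be \emph{homogeneous}, and for $\psi\neq 1$ the idempotent $e_\psi$ is not homogeneous: the $L_1$-action permutes it to $e_{\psi^w}$ (this is exactly the point of the Remark following Definition~\ref{defi twisted tensor}). Since $\pi$ is only a linear bijection and not a ring map, there is no reason for $\pi(\eps_{(\psi,1)}\,J(B_0)\,\eps_{(\psi\phi,1)})$ to equal $(e_\psi\otimes e_1)\,J(A_1\otimes_k A_2)\,(e_{\psi\phi}\otimes e_1)$; when you commute $\iota_2(b_\eta)$ past the inhomogeneous $\iota_1(e_{\psi\phi})$, different homogeneous components of $e_{\psi\phi}$ pick up different scalars, and the computation does not collapse to the clean formula you wrote.

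The paper sidesteps this by first summing: $\sum_{\psi}\eps_{(\psi,1)}=\iota_1(1)\iota_2(e_1)=\iota_2(e_1)$, and here both $1\in A_1^1$ and $e_1\in A_2^1$ \emph{are} homogeneous, so Proposition~\ref{prop emb ai}\eqref{point ideals} legitimately gives
\[
\pi\bigl(\iota_2(e_1)(J(B_0)/J^2(B_0))\iota_2(e_1)\bigr)
= J(A_1)/J^2(A_1)\otimes e_1(A_2/J(A_2))e_1 \;\oplus\; A_1/J(A_1)\otimes e_1(J(A_2)/J^2(A_2))e_1,
\]
the second summand vanishing for the same ``no self-loops'' reason you gave. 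This shows the whole space is spanned by the $S_{\mu,\nu}+J^2(B_0)$, and one then cuts back down using $\eps_{(\psi,1)}S_{\mu,\nu}\eps_{(\psi\phi,1)}=\delta_{\mu,\psi}\delta_{\mu\nu,\psi\phi}S_{\psi,\phi}$ from equation~\eqref{eqn s arrows}. Your argument is easily repaired along these lines; the only missing idea is to pass to the sum before invoking~\eqref{point ideals}.
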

\begin{proof}
	\begin{enumerate}
		\item 
		First of all note that ``$\supseteq$'' is clear by equation~\eqref{eqn s arrows}.  By Proposition~\ref{prop emb ai}~\eqref{point formula pi} it follows that $\pi(S_{\psi,\phi})=s_{\psi,\phi}\otimes e_1$, and this element is not contained in $J^2(A_1\otimes_k A_2)$. Hence $S_{\psi,\phi}\not\in J^2(B_0)$ by Proposition~\ref{prop emb ai}~\eqref{point radical}. That is, the spaces on the right hand side are all one-dimensional. 
		
		By definition we have 
		\begin{equation}
		\sum_{\psi\in\Irr(P_1)} \eps_{(\psi,1)} = \iota_2(e_1).
		\end{equation} 
		Since the other inclusion is already known, to prove ``$\subseteq$'' it will suffice to show that $\iota_2(e_1) (J(B_0)/J^2(B_0)) \iota_2(e_1)$ (which contains all of the $\eps_{(\psi,1)}(J(B_0)/J^2(B_0))\eps_{(\psi\phi,1)}$)) is spanned by elements of the form $S_{\psi,\phi} +J^2(B_0)$.	Since $\pi$ is bijective we may as well consider the images under $\pi$. Using Proposition~\ref{prop emb ai}~\eqref{point radical}~and~\eqref{point ideals}, as well as $\pi(\iota_2(e_1))=1\otimes e_1$, we have 
		 	\begin{equation}
		 	    \begin{array}{rl}
		 	    &\pi( \iota_2(e_1)(J(B_0)/J^2(B_0))\iota_2(e_1))\\
		 	    =&\left[J(A_1)/J^2(A_1)\otimes_k e_1(A_2/J(A_2))e_1\right] \oplus \left[A_1/J(A_1)\otimes_k  e_1(J(A_2)/J^2(A_2))e_1\right],
		 	    \end{array}
		 	\end{equation}
		 	which is spanned by elements of the form $s_{\psi,\phi}\otimes e_1 + J^2(A_1\otimes_k A_2)=\pi(S_{\psi,\phi} +J^2(B_0))$, since the second bracket is zero. This proves the first claim. 
		 	
		 	The second claim follows from the fact that, just like the other spaces we considered, $\eps_{(\phi,1)} (J(B_0)/J^2(B_0)) \eps_{(\phi,1)}$ is contained in $\iota_2(e_1) (J(B_0)/J^2(B_0)) \iota_2(e_1)$. We saw that the latter is spanned by the $S_{\mu,\nu}+J^2(B_0)$. But $\eps_{(\phi,1)} S_{\mu,\nu}\eps_{(\phi,1)}=0$ for all choices of $\mu,\nu$.
		\item By  Proposition~\ref{prop emb ai}~\eqref{point formula pi} (and Remark~\ref{remark iota hom}) we have 
		\begin{equation}
		\pi(S_{1,\phi_1}S_{\phi_1,\phi_2}\cdots S_{\prod_{i=1}^{\ell-1}\phi_i,\phi_\ell}) = s_{1,\phi_1}s_{\phi_1,\phi_2}\cdots s_{\prod_{i=1}^{\ell-1}\phi_i,\phi_\ell}\otimes e_1.
		\end{equation}
		By our knowledge of the basis of $A_1$ the right hand side is contained in $J^{\ell + 1}(A_1\otimes_k A_2)$ if and only if $\phi_1=\ldots=\phi_\ell$. Now the assertion follows by Proposition~\ref{prop emb ai}~\eqref{point radical}.
		\item By  Proposition~\ref{prop emb ai}~\eqref{point formula pi} (and Remark~\ref{remark iota hom}) we have 
		\begin{equation}
			\pi(S_{1,\phi}S_{\phi,\phi^{-1}}) = s_{1,\phi}s_{\phi,\phi^{-1}}\otimes e_1,\quad 
			\pi(S_{1,\phi}S_{\phi,\phi^{-1}}T_{1,\zeta}T_{\zeta,\zeta^{-1}}) = s_{1,\phi}s_{\phi,\phi^{-1}}\otimes s_{1,\zeta}s_{\psi,\zeta^{-1}},
		\end{equation}
		and by our knowledge of the bases of $A_1$ and $A_2$ these elements are linearly independent modulo $J^3(A_1\otimes_k A_2)$ and $J^5(A_1\otimes_k A_2)$, respectively. Our claim follows using Proposition~\ref{prop emb ai}~\eqref{point radical}.
		\item By Remark~\ref{remark iota hom} we see that 
		\begin{equation}
		\tilde S^\chi_{\phi} = \iota_1\left(\sum_{g\in L_1} \chi(g^{-1}) s_{1,\phi^g} s_{\phi^g,(\phi^{-1})^g} \right)\iota_2(e_1) \in \iota_1(e_1A_1^\chi e_1)\iota_2(e_1),
		\end{equation}
		that is, $\tilde S^\chi_{\phi}=\iota_1(\tilde s^\chi_{\phi})\iota_2(e_1)$ for some $\tilde s^\chi_{\phi} \in e_1A_1^\chi e_1$. Analogously we have $\tilde T^\eta_{\zeta}=\iota_1(e_1)\iota_2(\tilde t^\eta_{\zeta})$ for some $\tilde t^\eta_{\zeta} \in e_1A_2^\eta e_1$. Hence 
		\begin{equation}
		\tilde S^\chi_{\phi} \tilde T^\eta_{\zeta}=\iota_1(\tilde s^\chi_{\phi})\iota_2(\tilde t^\eta_{\zeta})\quad \textrm{and}\quad   \tilde T^\eta_{\zeta}\tilde S^\chi_{\phi}=\iota_2(\tilde t^\eta_{\zeta})\iota_1(\tilde s^\chi_{\phi}).
		\end{equation}
		The statement now follows directly from Proposition~\ref{prop emb ai}~\eqref{point commutation}.\qedhere
	\end{enumerate}
\end{proof}

\section{Morita equivalences}\label{sec Mor equ}

From now on let $\theta, \theta'\in \Irr(Z)$ be two faithful characters. We keep all other notation from the previous section. The equivalent in $B_0'=B(\theta')_0$ of the various elements given in Definition~\ref{defi eps s} will be denoted with a prime, e.g. $\eps_{(\phi,\psi)}'$, $S_{\psi,\phi}'$, $(\tilde S^\chi _{\phi})'$, and so on. We will show under which conditions $B_0$ and $B_0'$ are Morita equivalent. This will immediately enable us to prove the main theorem of this paper. 

\begin{prop}\label{prop e11 is special}  
	\begin{enumerate}
		\item\label{point dim preserved} A Morita equivalence between $B_0$ and $B'_0$ preserves the dimensions of simple modules. In particular, any such Morita equivalence is afforded by an isomorphism.
		\item An isomorphism $\tau:\ B_0\longrightarrow B'_0$ can be modified by an inner automorphism such that 
		\begin{equation}
		\left\{\sum_{\phi \in \Irr(P_1)} \tau(\eps_{(\phi,1)}), \sum_{\psi \in \Irr(P_2)} \tau(\eps_{(1,\psi)})\right\}=\left\{\sum_{\phi \in \Irr(P_1)} \eps'_{(\phi,1)}, \sum_{\psi \in \Irr(P_2)} \eps'_{(1,\psi)}\right\}.
		\end{equation}
	\end{enumerate}	
\end{prop}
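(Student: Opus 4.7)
For part~(1), I would start from the observation that any Morita equivalence $B_0\sim B'_0$ is afforded by a progenerator $P$ of $B'_0\text{-mod}$ satisfying $B_0\cong\End_{B'_0}(P)^{\mathrm{op}}$; writing $P\cong\bigoplus_j(P'_j)^{m_j}$, the multiplicity $m_j$ equals $\dim S_j^{B_0}$ where $S_j^{B_0}$ is the simple $B_0$-module corresponding to $P'_j$ under the equivalence. Thus ``dimensions of simples are preserved'' is equivalent to $m_j=\dim S'_j$ for all $j$, equivalently $P\cong B'_0$, equivalently that the equivalence is an isomorphism — so the two clauses of part~(1) coincide. By Lemma~\ref{lem:simples} the simple $B_0$- and $B'_0$-modules take only dimensions $1$ and $r$, with matching counts (both depending only on the group data), so what must be shown is that the induced bijection of simples cannot exchange a dim-$1$ and a dim-$r$ simple. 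I would establish this via an intrinsic characterisation of the dim-$1$ simples, using the Cartan matrix and the fact that their primitive idempotents $\eps_{(\phi,1)},\eps_{(1,\psi)}$ are themselves primitive (Proposition~\ref{prop idempotents}), whereas each dim-$r$ idempotent $\eps_{(\phi,\psi)}$ decomposes into $r$ mutually conjugate primitives — a distinction visible in the basic algebra.

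For part~(2), let $\tau:B_0\xrightarrow{\sim} B'_0$ be an isomorphism as furnished by part~(1). By Proposition~\ref{prop idempotents}, the family $\mathcal F=\{\eps_{(\phi,1)}\}_{\phi\in\Irr(P_1)}\cup\{\eps_{(1,\psi)}\}_{\psi\in\Irr(P_2)\setminus\{1\}}$ is a full set of orthogonal primitive idempotents of $B_0$ exhausting all isomorphism classes of dim-$1$ simples, and analogously $\mathcal F'$ in $B'_0$. Since $\tau$ preserves dimensions of simples, $\tau(\mathcal F)$ is a set of primitive idempotents in $B'_0$ of the same ``type'' (in terms of conjugacy classes) as $\mathcal F'$, and a standard conjugation-of-idempotents argument allows me to modify $\tau$ by an inner automorphism of $B'_0$ so that $\tau(\mathcal F)=\mathcal F'$ as sets.

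What remains is to show that the resulting bijection either sends the ``row'' $\{\eps_{(\phi,1)}\}_\phi$ to the row in $B'_0$ and the ``column'' $\{\eps_{(1,\psi)}\}_\psi$ to the column, or swaps them. For this I would argue from the Ext-quiver: by Lemma~\ref{lemma arrow properties}(1) (together with a direct check via Proposition~\ref{prop emb ai}(5) that $\eps_{(\phi,1)}(J(B_0)/J^2(B_0))\eps_{(1,\psi)}=0$ for $\phi,\psi\neq 1$, since any such element lies in $J(A_1)\otimes J(A_2)\subseteq J^2(A_1\otimes_k A_2)$), the arrows between dim-$1$ vertices are precisely the $S_{\psi,\phi}$'s, forming a loop-free complete digraph on the row $\{(\phi,1):\phi\in\Irr(P_1)\}$, and the $T_{\xi,\zeta}$'s, forming a loop-free complete digraph on the column $\{(1,\psi):\psi\in\Irr(P_2)\}$, with no arrows between a non-trivial row vertex and a non-trivial column vertex. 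Therefore $(1,1)$ is characterised among dim-$1$ vertices as the unique one whose removal disconnects the dim-$1$ subquiver, the two resulting components being the deleted row and the deleted column. Since $\tau$ is an algebra isomorphism it preserves this purely combinatorial data, so necessarily $\tau(\eps_{(1,1)})=\eps'_{(1,1)}$ and the two wings are sent, possibly swapped, to the two wings in $B'_0$. Reattaching $\eps'_{(1,1)}$ to each wing yields the required equality of sets. The main obstacle is the intrinsic identification of dim-$1$ simples in part~(1); once this is in place, the quiver-theoretic step in part~(2) is a direct application of the structure theory from Section~\ref{sec subalg}.
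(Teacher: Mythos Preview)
Your reduction at the start of part~(1) --- that preservation of dimensions of simples is equivalent to the Morita equivalence being an isomorphism --- is correct and useful. The gap is in the ``intrinsic characterisation'' you propose. The fact that $\eps_{(\phi,\psi)}$ (for $\phi,\psi\neq 1$) decomposes into $r$ mutually conjugate primitives is nothing other than the statement that the corresponding simple $B_0$-module has dimension~$r$; it is precisely the non-Morita-invariant datum you are trying to pin down, and in the basic algebra this distinction disappears entirely (every primitive idempotent there corresponds to a one-dimensional simple). Invoking ``the Cartan matrix'' without computing anything does not close the gap either: you would need to exhibit a concrete asymmetry in the Cartan numbers separating the two families, and you have not done so. The paper instead uses a genuinely Morita-invariant property: it shows, by a direct computation with $\pi$ and Proposition~\ref{prop emb ai}\eqref{point radical},\eqref{point ideals}, that $\eps_{(\phi,\psi)}(J(B_0)/J^2(B_0))\eps_{(\phi,\psi)}\neq 0$ for $\phi,\psi\neq 1$, i.e.\ these simples have non-trivial self-extensions, whereas by Lemma~\ref{lemma arrow properties}\eqref{point ext} the simples labelled $(\phi,1)$ and $(1,\psi)$ do not. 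Since $\Ext^1(S,S)$ is Morita-invariant, this separates the two families.

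Your argument for part~(2) is essentially the paper's. The paper likewise computes the $\Ext^1$ between the one-dimensional simples: one-dimensional within each of the two ``wings'' $\{(\phi,1)\}_\phi$ and $\{(1,\psi)\}_\psi$ (Lemma~\ref{lemma arrow properties}\eqref{point ext}), and zero between a non-trivial row vertex and a non-trivial column vertex, by exactly the inclusion $\pi(\eps_{(\phi,1)}J(B_0)\eps_{(1,\zeta)})\subseteq J(A_1)\otimes J(A_2)\subseteq J^2(A_1\otimes_k A_2)$ that you indicate. Your phrasing in terms of the cut-vertex $(1,1)$ disconnecting the subquiver on one-dimensional simples is a clean way to package the same conclusion.
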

	\begin{proof}
		We will prove this by finding distinguishing Morita invariant properties of the simple modules in $B_0$ and the attached idempotents. By definition we have $\pi(\eps_{(\phi,\psi)})=e_{[\phi]}\otimes e_{[\psi]}$ if $\phi\neq 1$ and $\psi\neq 1$. Using Proposition~\ref{prop emb ai}~\eqref{point radical}~and~\eqref{point ideals}, we get
		\begin{equation}
			\begin{array}{rcll}
			\pi(\eps_{(\phi,\psi)}\xcdot  J(B_0)/J^2(B_0)\xcdot \eps_{(\phi,\psi)})&=&&e_{[\phi]}(J(A_1)/J^2(A_1))e_{[\phi]} \otimes_k e_{[\psi]}(A_2/J(A_2))e_{[\psi]} \\
			 &&\oplus& e_{[\phi]}(A_1/J(A_1))e_{[\phi]} \otimes_k e_{[\psi]}(J(A_2)/J^2(A_2))e_{[\psi]},
			\end{array}
		\end{equation}
		which is clearly non-zero (e.g. $s_{\phi, \phi^{-1}\phi^g}\otimes e_{[\psi]}$ gives a non-trivial on the right hand side for any $1\neq g\in L_1$). In particular the simple modules belonging to the characters of the form $(\phi,\psi)$ all have non-trivial self-extensions. This implies the first assertion since by Lemma~\ref{lemma arrow properties}~\eqref{point ext} the other simples do not have non-trivial self-extensions.
		
		From  Lemma~\ref{lemma arrow properties}~\eqref{point ext}  we already know that the $\Ext^1$ between two simple modules labeled by $(\phi,1)$ and $(\phi',1)$ is one-dimensional if $\phi \neq \phi'\in\Irr(P_1)$. The analogous statement holds for the simples labeled by $(1,\zeta)$ and $(1,\zeta')$. It therefore suffices to show that there are no non-trivial extensions between the simples labeled $(\phi,1)$ and $(1,\zeta)$, where $1\neq \phi \in \Irr(P_1)$ and $1\neq \zeta\in\Irr(P_2)$. The sum of the $\eps_{(\phi,1)}$ for $\phi\neq 1$ is equal to 
		$f_1=\iota_1(1-e_1)\iota_2(e_1)$, and, analogously, the sum of the $\eps_{(1,\zeta)}$ for $\zeta\neq 1$ is equal to $f_2=\iota_1(e_1)\iota_2(1-e_1)$. Note that $f_1$ and $f_2$ are suitable for application of Proposition~\ref{prop emb ai}~\eqref{point ideals}, and we have $\pi(f_1)=(1-e_1)\otimes e_1$ and $\pi(f_2)=e_1\otimes(1-e_1)$. Hence
		\begin{equation}
		    \begin{split}
			\pi(\eps_{(\phi,1)}J(B_0)\eps_{(1,\zeta)})
			\subseteq ((1-e_1)\otimes e_1) J(A_1\otimes_k A_2)(e_1\otimes (1-e_1))\\
			= (1-e_1)J(A_1)e_1\otimes_k e_1 J(A_2)(1-e_1)\subseteq J^2(A_1\otimes_k A_2).
			\end{split}
		\end{equation}
		It follows that $\eps_{(\phi,1)}(J(B_0)/J^2(B_0))\eps_{(1,\zeta)}=0$, that is, there are no non-trivial extensions between the corresponding simple modules.
%
	\end{proof}

	\begin{remark}\label{remark complete set}
	From Proposition~\ref{prop idempotents} and Proposition~\ref{prop e11 is special}~(2) it follows that
	a $\tau$ as in Proposition~\ref{prop e11 is special}~(2) will, up to an inner automorphism, satisfy either
	\begin{equation}
	 \tau(\eps_{(\phi,1)})=\eps'_{(\sigma(\phi),1)},\quad \textrm{or} \quad \tau(\eps_{(\phi,1)})=\eps'_{(1,\sigma(\phi))},
	\end{equation} 
	for a bijective map $\sigma$ from $\Irr(P_1)$ to either  $\Irr(P_1)$ or $\Irr(P_2)$. 
	The analogous statement holds for the $\eps_{(1,\zeta)}$, and thus, in particular, $\tau(\eps_{(1,1)})=\eps'_{(1,1)}$.
	\end{remark}

	\begin{prop}\label{prop automorphisms rigid}
		Let $\tau:\ B_0\longrightarrow B_0'$ be an isomorphism 
		such that 
		\begin{equation}
			\tau(\eps_{(\phi,1)}) = \eps'_{(\sigma(\phi),1)}\quad\textrm{ for a bijective map $\sigma:\ \Irr(P_1)\longrightarrow \Irr(P_1)$},
		\end{equation}
		and $\sigma(1)=1$. Then $\sigma$ is a group automorphism.
	\end{prop}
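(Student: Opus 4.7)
The plan is to exploit the commutativity relation $S_{1,\phi}S_{\phi,\zeta}=S_{1,\zeta}S_{\zeta,\phi}$ in $B_0$ -- which descends from $s_{1,\phi}s_{\phi,\zeta}=s_{1,\zeta}s_{\zeta,\phi}$ in $A_1$ (see $\S$\ref{sec subalg}) -- and transport it via $\tau$ into $B_0'$. First I would record the action of $\tau$ on arrows. By Lemma~\ref{lemma arrow properties}\eqref{point ext}, $\eps_{(\psi,1)}(J(B_0)/J^2(B_0))\eps_{(\psi\phi,1)}$ is one-dimensional and spanned by $S_{\psi,\phi}$, and the analogous space in $B_0'$ between $\eps'_{(\sigma(\psi),1)}$ and $\eps'_{(\sigma(\psi\phi),1)}$ is spanned by $S'_{\sigma(\psi),\,\sigma(\psi)^{-1}\sigma(\psi\phi)}$. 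Since $\tau$ respects the $\sigma$-indexed idempotents and is an isomorphism, there are scalars $c_{\psi,\phi}\in k^\times$ with
\begin{equation}
\tau(S_{\psi,\phi}) \equiv c_{\psi,\phi}\xcdot S'_{\sigma(\psi),\,\sigma(\psi)^{-1}\sigma(\psi\phi)} \pmod{J^2(B_0')}.
\end{equation}

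Next, for $\phi,\zeta\in \Irr(P_1)\setminus\{1\}$ with $\phi\neq\zeta$, applying $\tau$ to the commutativity relation and reducing modulo $J^3(B_0')$ yields
\begin{equation}
c_{1,\phi}c_{\phi,\zeta}\xcdot S'_{1,\sigma(\phi)}S'_{\sigma(\phi),\,\sigma(\phi)^{-1}\sigma(\phi\zeta)}\equiv c_{1,\zeta}c_{\zeta,\phi}\xcdot S'_{1,\sigma(\zeta)}S'_{\sigma(\zeta),\,\sigma(\zeta)^{-1}\sigma(\phi\zeta)}\pmod{J^3(B_0')}.
\end{equation}
The argument of Lemma~\ref{lemma arrow properties}\eqref{point lin indep}, applied with a general target vertex and transported via $\pi$ using the explicit basis of $J^2(A_1)/J^3(A_1)$ from $\S$\ref{sec subalg}, shows that the length-two paths $S'_{1,\mu}S'_{\mu,\mu^{-1}\sigma(\phi\zeta)}$ are linearly independent modulo $J^3(B_0')$ except for the swap $\mu\leftrightarrow\mu^{-1}\sigma(\phi\zeta)$ coming from commutativity. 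Because $\sigma(\phi)\neq\sigma(\zeta)$ by injectivity of $\sigma$, proportionality of the two displayed elements forces $\sigma(\phi)=\sigma(\zeta)^{-1}\sigma(\phi\zeta)$, i.e.\ $\sigma(\phi\zeta)=\sigma(\phi)\sigma(\zeta)$.

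It then remains to upgrade this to full multiplicativity. Cases involving $1$ are immediate from $\sigma(1)=1$, and note that $p\geq 3$ since $r=|L|\geq 2$ divides $p-1$. For $\phi=\zeta\neq 1$ with $p\geq 5$ I would choose $\eta\in \Irr(P_1)\setminus\{1,\phi,\phi^{-1}\}$; then $\phi\eta\neq\eta^{-1}\phi$ (as $\eta\neq\eta^{-1}$), and three applications of the preceding paragraph give $\sigma(\phi^2)=\sigma(\phi\eta)\sigma(\eta^{-1}\phi)=\sigma(\phi)\sigma(\eta)\sigma(\eta)^{-1}\sigma(\phi)=\sigma(\phi)^2$. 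When $p=3$, any bijection of $\Irr(P_1)\cong C_3$ fixing $1$ is automatically a group automorphism, so nothing further is needed. The main obstacle is the general-target-vertex version of Lemma~\ref{lemma arrow properties}\eqref{point lin indep} invoked in the second paragraph, but this is a direct reprise of the same argument based on Proposition~\ref{prop emb ai}\eqref{point radical}--\eqref{point ideals} and the basis description of $J^2(A_1)/J^3(A_1)$.
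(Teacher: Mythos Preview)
Your proof is correct and takes a genuinely different route from the paper's. The paper argues via Lemma~\ref{lemma arrow properties}\eqref{point all psi equal}: it transports the length-$\ell$ relation $S_{1,\phi}S_{\phi,\phi}\cdots S_{\phi^{\ell-1},\phi}\in J^{\ell+1}(B_0)$ through $\tau$, and then an induction on $i$ (comparing the second indices at positions $q=0$ and $q=1$ in the transported product) yields $\sigma(\zeta^i)=\sigma(\zeta)^i$ for a fixed generator $\zeta$. You instead exploit the length-$2$ commutativity relation $S_{1,\phi}S_{\phi,\zeta}=S_{1,\zeta}S_{\zeta,\phi}$, work entirely modulo $J^3(B_0')$, and reduce the question to the identification of unordered pairs in the basis of $e_1\bigl(J^2(A_1)/J^3(A_1)\bigr)e_{\sigma(\phi\zeta)}$. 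Your approach has the pleasant feature of living in a much shallower quotient (depth $3$ rather than $\ell+1$) and of dispensing with induction; the cost is the mild ``general target vertex'' extension of Lemma~\ref{lemma arrow properties}\eqref{point lin indep} (which, as you note, is immediate from Proposition~\ref{prop emb ai}\eqref{point radical},\eqref{point ideals} and \S\ref{sec subalg}) together with the separate handling of the diagonal case $\phi=\zeta$, which the paper's length-$\ell$ argument absorbs uniformly. One small quibble: your final chain $\sigma(\phi^2)=\sigma(\phi\eta)\sigma(\eta^{-1}\phi)=\sigma(\phi)\sigma(\eta)\sigma(\eta)^{-1}\sigma(\phi)$ actually uses \emph{four} applications of the off-diagonal case, not three, since the identification $\sigma(\eta^{-1})=\sigma(\eta)^{-1}$ is itself one such application (with the pair $\eta,\eta^{-1}$).
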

	\begin{proof}
		 All we need to show is that $\sigma(\zeta^i)=\sigma(\zeta)^i$ for all $i\geq 0$ for some arbitrary generator $\zeta\in \Irr(P_1)$. For $i< 2$ this is clear. 
		By way of induction we may assume that $\sigma(\zeta^j)=\sigma(\zeta)^j$ for all $j< i$, and $i\geq 2$. 
		Now
		for any $\psi\in \Irr(P_1)$ and $1\neq \phi \in \Irr(P_1)$ we can write (using Lemma~\ref{lemma arrow properties}~\eqref{point ext})
		 \begin{equation}
		 	\tau(S_{\psi,\phi})+J^2(B_0')= c_{\psi,\phi}\xcdot S'_{\sigma(\psi), \sigma(\psi)^{-1}\sigma(\psi\phi)}+ J^2(B_0')\quad\textrm{for some $c_{\psi,\phi}\in k^\times$.}
		 \end{equation}	
		  By applying $\tau$ to the element from  Lemma~\ref{lemma arrow properties}~\eqref{point all psi equal} it follows that 
		\begin{equation}\label{eqn rrksks}	
			S'_{\sigma(\psi), \sigma(\psi)^{-1}\sigma(\psi\phi)}S'_{\sigma(\psi\phi), \sigma(\psi\phi)^{-1}\sigma(\psi\phi^2)} \cdots  S'_{\sigma(\psi\phi^{\ell-1}), \sigma(\psi\phi^{\ell-1})^{-1}\sigma(\psi\phi^\ell)} \in J^{\ell+1}(B_0'),
		\end{equation}
		again for any $\psi\in \Irr(P_1)$ and $1\neq \phi \in \Irr(P_1)$. By Lemma~\ref{lemma arrow properties}~\eqref{point all psi equal} all second indices occurring in \eqref{eqn rrksks}, that is, all $\sigma(\psi\phi^q)^{-1}\sigma(\psi\phi^{q+1})$ for $0\leq q <\ell$, must be equal for the element to be contained in $J^{\ell+1}(B_0')$. In particular, if we specialise $\psi=\zeta^{i-2}$, $\phi=\zeta$ and look at $q=0$ and $q=1$, we get
		\begin{equation}
			\sigma(\zeta^{i-2})^{-1}\sigma(\zeta^{i-1})=\sigma(\zeta^{i-1})^{-1}\sigma(\zeta^{i}).
		\end{equation}
		The left hand side is equal to $\sigma(\zeta)$ by the induction hypothesis, which implies that 
		$\sigma(\zeta^i)=\sigma(\zeta^{i-1})\xcdot \sigma(\zeta)=\sigma(\zeta)^i$, which completes the induction step.
	\end{proof}

	Of course the analogue of the above statement with $(\phi,1)$ swapped for $(1,\zeta)$ holds as well.

\begin{prop}\label{prop non isom}
	The block $B({\theta})$ is Morita equivalent to $B({\theta'})$ if and only if $\theta'=\theta^{\pm 1}$.
\end{prop}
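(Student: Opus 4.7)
The ``if'' direction is immediate: if $\theta'=\theta$ there is nothing to prove, while for $\theta'=\theta^{-1}$ Proposition~\ref{prop isoms}~(2) already produces an isomorphism $B(\theta)\xrightarrow{\sim}B(\theta^{-1})$, which is a fortiori a Morita equivalence.

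For the ``only if'' direction, Proposition~\ref{prop e11 is special}~(1) upgrades any Morita equivalence between $B_0$ and $B_0'$ to an actual isomorphism $\tau:B_0\xrightarrow{\sim}B_0'$; the plan is to normalize $\tau$ until it fixes every distinguished idempotent, and then extract $\theta$ from the action of $\tau$ on the commutation relation of Lemma~\ref{lemma arrow properties}~(4). Normalization proceeds in three layers. By Proposition~\ref{prop e11 is special}~(2) and Remark~\ref{remark complete set}, after an inner automorphism of $B_0'$ either $\tau(\eps_{(\phi,1)})=\eps'_{(\sigma(\phi),1)}$ and $\tau(\eps_{(1,\psi)})=\eps'_{(1,\mu(\psi))}$ for some bijections $\sigma,\mu$, or the two families of idempotents get swapped. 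In the swapped case, post-composing with the isomorphism of Proposition~\ref{prop isoms}~(2) reduces to the non-swapped case at the cost of replacing $\theta'$ by $\theta'^{-1}$. Proposition~\ref{prop automorphisms rigid} then shows that $\sigma,\mu$ are group automorphisms, hence given by elements $g_1,g_2\in\F_p^\times$, and composing $\tau$ with the automorphism of $B(\theta')$ from Proposition~\ref{prop isoms}~(1) attached to $(g_1^{-1},g_2^{-1})$ cancels them, so that henceforth $\tau(\eps_{(\phi,1)})=\eps'_{(\phi,1)}$ and $\tau(\eps_{(1,\psi)})=\eps'_{(1,\psi)}$.

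With $\tau$ so normalized, Lemma~\ref{lemma arrow properties}~(1) forces $\tau(S_{\psi,\phi})\equiv a_{\psi,\phi}S'_{\psi,\phi}$ and $\tau(T_{\xi,\zeta})\equiv b_{\xi,\zeta}T'_{\xi,\zeta}$ modulo $J^2(B_0')$ for non-zero scalars $a_{\psi,\phi},b_{\xi,\zeta}$. One then computes $\tau(\tilde S^\chi_\phi)$ and $\tau(\tilde T^\eta_\zeta)$ modulo $J^3(B_0')$ in the basis of Lemma~\ref{lemma arrow properties}~(3), applies $\tau$ to the identity of Lemma~\ref{lemma arrow properties}~(4), expands both sides modulo $J^5(B_0')$, and decomposes the result according to the $L_1\times L_2$-bidegree on $B_0'$ coming from Proposition~\ref{prop isoms}~(1). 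After a character-orthogonality calculation, the $(\chi,\eta)$-bidegree component of the resulting relation reduces to a scalar $K\in k$ times $(\tilde S^\chi_\phi)'(\tilde T^\eta_\zeta)'$ times the difference $\theta([h^{\theta}_{\eta,2},h^{\theta}_{\chi,1}])-\theta'([h^{\theta'}_{\eta,2},h^{\theta'}_{\chi,1}])$. Since $(\tilde S^\chi_\phi)'(\tilde T^\eta_\zeta)'\notin J^5(B_0')$ by Lemma~\ref{lemma arrow properties}~(3), whenever $K\neq 0$ this difference must vanish. By Definition~\ref{def h chi} the two terms evaluate to $\chi(h^{\theta}_{\eta,2})^{-1}$ and $\chi(h^{\theta'}_{\eta,2})^{-1}$ respectively, so varying $\chi\in\Irr(L_1)$ yields $h^{\theta}_{\eta,2}=h^{\theta'}_{\eta,2}$ for all $\eta\in\Irr(L_2)$; since the map $\eta\mapsto h_{\eta,2}$ determines $\theta$ on $Z=[L_1,L_2]$, we conclude $\theta=\theta'$.

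The main obstacle is showing $K\neq 0$: explicitly, after the character-orthogonality bookkeeping $K$ is (up to a non-zero combinatorial factor) of the form $\bigl(\sum_{g\in L_1}a_{1,\phi^g}a_{\phi^g,(\phi^{-1})^g}\bigr)\cdot\bigl(\sum_{h\in L_2}b_{1,\zeta^h}b_{\zeta^h,(\zeta^{-1})^h}\bigr)$, and one has to argue that $\phi\in\Irr(P_1)\setminus\{1\}$ and $\zeta\in\Irr(P_2)\setminus\{1\}$ can be chosen so that neither factor vanishes. This should follow by showing that an identically-vanishing $K$ would impose too many relations on the $a$'s and $b$'s compared to those forced by the product relations in $A_1$ and $A_2$ combined with the algebra-homomorphism requirement on $\tau$; it is this step where the linear independence of Lemma~\ref{lemma arrow properties}~(3) modulo $J^5(B_0')$ is used in an essential way.
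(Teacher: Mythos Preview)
Your normalization steps match the paper's proof exactly, and the strategy of transporting the commutation relation of Lemma~\ref{lemma arrow properties}~\eqref{point schi comm} through $\tau$ is the right one. However, there is a genuine gap at the final step: you need the scalar $K=\bigl(\sum_{g\in L_1}u_g\bigr)\bigl(\sum_{h\in L_2}v_h\bigr)$ (where $u_g=a_{1,\phi^g}a_{\phi^g,(\phi^{-1})^g}$ and $v_h=b_{1,\zeta^h}b_{\zeta^h,(\zeta^{-1})^h}$) to be non-zero, and your proposed ``too many relations'' argument does not work. A priori the $u_g$ are arbitrary non-zero scalars in $k$, and there is no obstruction coming from the relations in $A_1$ or from $\tau$ being an algebra homomorphism that prevents $\sum_g u_g=0$; those constraints control products of the $a_{\psi,\phi}$ along longer paths in the quiver, not this particular linear combination.

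The paper closes this gap with an observation you are missing. The element $\tilde S^{1}_\phi=\sum_{g\in L_1}S_{1,\phi^g}S_{\phi^g,(\phi^{-1})^g}$ (i.e.\ the one with $\chi=1_{L_1}$) is characterized \emph{intrinsically} inside $\boldsymbol{S}_\phi+J^3(B_0)$ as spanning the unique line of elements that commute with all of $\boldsymbol{T}_\zeta$ modulo $J^5(B_0)$; this follows from Lemma~\ref{lemma arrow properties}~\eqref{point lin indep}~and~\eqref{point schi comm}. Since $\tau$ carries $\boldsymbol{S}_\phi$ to $\boldsymbol{S}'_\phi$ and $\boldsymbol{T}_\zeta$ to $\boldsymbol{T}'_\zeta$ modulo the appropriate radical powers, it must send $\langle\tilde S^1_\phi\rangle_k+J^3$ to $\langle(\tilde S^1_\phi)'\rangle_k+J^3$. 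But $\tau(\tilde S^1_\phi)\equiv\sum_g u_g\,S'_{1,\phi^g}S'_{\phi^g,(\phi^{-1})^g}$, and by the linear independence in Lemma~\ref{lemma arrow properties}~\eqref{point lin indep} the only way this is proportional to $(\tilde S^1_\phi)'=\sum_g S'_{1,\phi^g}S'_{\phi^g,(\phi^{-1})^g}$ is if all the $u_g$ are equal to a common value $u\in k^\times$. One then gets $\tau(\tilde S^\chi_\phi)\equiv u\,(\tilde S^\chi_\phi)'$ for \emph{every} $\chi$ (and analogously $\tau(\tilde T^\eta_\zeta)\equiv v\,(\tilde T^\eta_\zeta)'$), so the commutation relation transports directly with a non-zero coefficient $uv$, and no vanishing issue arises.
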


\begin{proof}
	We first note that if $\theta'=\theta^{\pm 1}$, then, by Lemma \ref{prop isoms}~(2), $B$ is Morita equivalent to $B'$.	
	Conversely, suppose $B$ is Morita equivalent to $B'$. Of course this implies that $B_0$ and $B'_0$ are Morita equivalent. Moreover, by Proposition \ref{prop e11 is special}~\eqref{point dim preserved}, any such Morita equivalence must preserve the dimensions of the simple modules and so we may assume that it is induced by a $k$-algebra isomorphism $\tau:B_0\to B'_0$.
	
	By Remark~\ref{remark complete set} we may assume, after pre-composing with an inner automorphism and the isomorphism from Lemma~\ref{prop isoms}~(2) (in which case we replace $\theta$ by $\theta^{-1}$), that 
	$\tau(\eps_{(\phi,1)}) = \eps'_{(\sigma_1(\phi),1)}$ and $\tau(\eps_{(1,\zeta)}) = \eps'_{(1,\sigma_2(\zeta))}$ for maps $\sigma_i:\ \Irr(P_i)\longrightarrow \Irr(P_i)$. Furthermore, by Proposition~\ref{prop automorphisms rigid} we may assume that the $\sigma_i$ are group automorphisms of $\Irr(P_i)$. Certainly every group automorphism of $\Irr(P_i)$ is induced by one of $P_i$ and so, possibly after pre-composing $\tau$ with an automorphism as in Lemma \ref{prop isoms}~(1), we may assume that both $\sigma_i$ are the identity. That is, we may assume
	\begin{equation}\label{eqn idempot preserved}
		\tau(\eps_{(\phi,1)})=\eps'_{(\phi,1)},\quad \tau(\eps_{(1,\zeta)})=\eps'_{(1,\zeta)} \quad \textrm{for any $\phi\in\Irr(P_1)$ and $\zeta\in\Irr(P_2)$}.
	\end{equation}
	From now on we fix a $\phi\in \Irr(P_1)\setminus\{1\}$ and $\zeta\in \Irr(P_2)\setminus\{1\}$ and define the spaces
	\begin{align*}
	\boldsymbol{S}_\phi&=\langle \tilde{S}_\phi^\chi|\chi\in\Irr(L_1)\rangle_k=\langle S_{1,\phi^h}S_{\phi^h,(\phi^h)^{-1}}|h\in L_1\rangle_k,\\
	\boldsymbol{T}_\zeta&=\langle \tilde{T}_\zeta^\eta|\eta\in\Irr(L_2)\rangle_k=\langle T_{1,\zeta^h}T_{\zeta^h,(\zeta^h)^{-1}}|h\in L_2\rangle_k.
	\end{align*}
	The first assertion of Lemma~\ref{lemma arrow properties}~\eqref{point lin indep} gives that $\boldsymbol{S}_\phi$ and $\boldsymbol{T}_\zeta$ both have dimension $r$ modulo $J^3(B_0)$, with bases $(\tilde{S}_\phi^\chi+J^3(B_0))_{\chi\in\Irr(L_1)}$ and $(\tilde{T}_\zeta^\eta+J^3(B_0))_{\eta\in\Irr(L_2)}$ respectively. From the second assertion of Lemma~\ref{lemma arrow properties}~\eqref{point lin indep} and Lemma~\ref{lemma arrow properties}~\eqref{point schi comm}, we get that
	\begin{equation}\label{algn comm S}
	\begin{array}{rl}
	\{x\in\boldsymbol{S}_\phi+J^3(B_0)|&xy-yx\in J^5(B_0)\text{ for all }y\in \boldsymbol{T}_\zeta+J^3(B_0)\}\\
	=&\langle \tilde{S}_\phi^{1}\rangle_k+J^3(B_0)
	\end{array}
	\end{equation}
	and similarly
	\begin{equation}\label{algn comm T}
	\begin{array}{rl}
	\{x\in\boldsymbol{T}_\zeta+J^3(B_0)|&xy-yx\in J^5(B_0)\text{ for all }y\in \boldsymbol{S}_\phi+J^3(B_0)\}\\
	=&\langle \tilde{T}_\zeta^{1}\rangle_k+J^3(B_0).
	\end{array}
	\end{equation}
	Of course, the analogous assertions for the algebra $B'_0$ hold as well. 
	
	Equation \eqref{eqn idempot preserved} combined with Lemma~\ref{lemma arrow properties}~\eqref{point ext} implies that
	\begin{align}\label{algn:ST mod J2}
	\begin{split}
	\tau(\langle S_{\mu,\nu}\rangle_k)+J^2(B_0)&=\langle S'_{\mu,\nu}\rangle_k+J^2(B'_0),\\
	\tau(\langle T_{\gamma,\delta}\rangle_k)+J^2(B_0)&=\langle T'_{\gamma,\delta}\rangle_k+J^2(B'_0),
	\end{split}
	\end{align}
	for all $\mu,\nu\in\Irr(P_1)$ and $\gamma,\delta\in\Irr(P_2)$, $\nu\neq 1$ and $\delta \neq 1$. Therefore, by Lemma~\ref{lemma arrow properties}~\eqref{point lin indep},
	\begin{align*}
	\tau(S_{1,\phi^g}S_{\phi^g,(\phi^g)^{-1}})+J^3(B_0)&=u_{g}\xcdot  S'_{1,\phi^g}S'_{\phi^g,(\phi^g)^{-1}}+J^3(B'_0),\\
	\tau(T_{1,\zeta^h}T_{\zeta^h,(\zeta^h)^{-1}})+J^3(B_0)&=v_{h}\xcdot T'_{1,\zeta^h}T'_{\zeta^h,(\zeta^h)^{-1}}+J^3(B'_0),	
	\end{align*}
	for all $g\in L_1$, $h\in L_2$ and uniquely determined $u_{g},v_{h}\in k^\times$. Furthermore, \eqref{algn comm S} and \eqref{algn comm T}	 give 
	\begin{align*}
	\tau(\langle \tilde S_\phi^{1}\rangle_k)+J^3(B_0)=\langle (\tilde S_\phi^{1})'\rangle_k+J^3(B'_0),\\
	\tau(\langle \tilde T_\zeta^{1}\rangle_k)+J^3(B_0)=\langle (\tilde T_\zeta^{1})'\rangle_k+J^3(B'_0).
	\end{align*}
	Therefore, all the $u_{g}$'s are equal, say $u$. Similarly we set $v$ to be the common value of the $v_{h}$'s. In particular,
	\begin{align*}
	\tau(\tilde{S}_\phi^\chi)+J^3(B_0)&=u\xcdot (\tilde{S}_\phi^\chi)'+J^3(B'_0),\\
	\tau(\tilde{T}_\zeta^\eta)+J^3(B_0)&=v\xcdot (\tilde{T}_\zeta^\eta)'+J^3(B'_0),
	\end{align*}
	for all $\chi\in\Irr(L_1)$ and $\eta\in\Irr(L_2)$. From Lemma~\ref{lemma arrow properties}~\eqref{point schi comm} we get the identities
	\begin{equation}
	\tilde S^\chi_{\phi} \tilde T^\eta_{\zeta} = \theta([h^\theta_{\eta,2}, h^\theta_{\chi,1}]) \xcdot \tilde T^\eta_{\zeta} \tilde S^\chi_{\phi}\quad\textrm{and}\quad(\tilde S^\chi_{\phi})' (\tilde T^\eta_{\zeta})' = \theta'([h^{\theta'}_{\eta,2}, h^{\theta'}_{\chi,1}]) \xcdot (\tilde T^\eta_{\zeta})' (\tilde S^\chi_{\phi})'
	\end{equation}
	Using the second assertion of Lemma~\ref{lemma arrow properties}~\eqref{point lin indep} we can apply $\tau$ to the first and compare to the second modulo $J^5(B_0')$. That gives
	\begin{equation}
	\eta(h^\theta_{\chi,1})=\theta([h^\theta_{\eta,2}, h^\theta_{\chi,1}])=\theta'([h^{\theta'}_{\eta,2} h^{\theta'}_{\chi,1}])=\eta(h_{\chi,1}^{\theta'}),
	\end{equation} 
	for all $\chi\in\Irr(L_1)$ and $\eta\in\Irr(L_2)$. Therefore, $h^\theta_{\chi,1}=h^{\theta'}_{\chi,1}$, for all $\chi\in\Irr(L_1)$. Finally, since $[L_1,L_2]=Z$, it follows from Definition~\ref{def h chi} that $\theta=\theta'$.
\end{proof}
	
	\begin{thm}\label{thm:main}
		Let $\ell$ be a prime and $n\in\N$. Then there exists an $\ell$-block $B$ of $kG$, for a finite group $G$, such that $\mf(B)=n$.
	\end{thm}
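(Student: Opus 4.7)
The plan is to reduce the computation of $\mf(B(\theta))$ to a purely number-theoretic condition via Proposition~\ref{prop non isom}, and then choose the parameter $r$ of \S\ref{sec setup} so that this condition first holds at the prescribed $n$.

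By the discussion at the start of \S1, $B(\theta)^{(\ell^m)}$ is $k$-algebra isomorphic to $\sigma^m(B(\theta)) = kG\,\sigma^m(e_\theta)$. Using the standard formula $e_\theta = |Z|^{-1} \sum_{z \in Z} \theta(z^{-1})\, z$ together with the fact that $|Z|^{-1}$ lies in the image of $\F_\ell$, one computes $\sigma(e_\theta) = e_{\theta^\ell}$, where $\theta^\ell(z) := \theta(z)^\ell$. Iterating yields $B(\theta)^{(\ell^m)} \cong B(\theta^{\ell^m})$ as $k$-algebras. Combining this with Proposition~\ref{prop non isom} and the faithfulness of $\theta$ on $Z \cong C_r$ gives
\[
\mf(B(\theta)) \;=\; \min\{\,m \geq 1 : \ell^m \equiv \pm 1 \pmod{r}\,\}.
\]

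It thus suffices, given $n$, to find a prime $r$ coprime to $\ell$ such that $\ell^m \not\equiv \pm 1 \pmod{r}$ for every $1 \leq m < n$ while $\ell^n \equiv \pm 1 \pmod{r}$. For $n \geq 2$, Zsygmondy's theorem applied to $\ell^{2n} - 1$ produces, outside its exceptional cases, a primitive prime divisor $r$ so that $\ell$ has order exactly $2n$ in $(\Z/r\Z)^\times$; then $\ell^n \equiv -1 \pmod{r}$ and no smaller exponent gives $\pm 1$. The exceptional cases are dispatched directly: for $n = 1$ with $\ell$ odd take $r = 2$; for $n = 1$ with $\ell = 2$ take $r = 3$ (so that $2 \equiv -1 \pmod 3$); and for the Zsygmondy exception $(\ell, n) = (2, 3)$ take $r = 7$, where $2$ has order $3$ in $(\Z/7\Z)^\times$. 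Dirichlet's theorem on primes in arithmetic progressions then furnishes a prime $p \neq \ell$ with $p \equiv 1 \pmod{r}$, providing an $\ell'$-subgroup $L \leq \F_p^\times$ of order $r$, and the construction of \S\ref{sec setup} applied to these $p$, $r$, and any faithful $\theta \in \Irr(Z)$ delivers a block $B = B(\theta)$ with $\mf(B) = n$.

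The main obstacle is the number-theoretic choice of $r$: one has to rule out $\ell^m \equiv +1$ and $\ell^m \equiv -1$ modulo $r$ simultaneously for every $1 \leq m < n$. Zsygmondy's theorem handles both conditions at once via a single primitive-prime-divisor argument, and its finite list of exceptions can be treated by hand as above.
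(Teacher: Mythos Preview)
Your reduction to the condition $\mf(B(\theta))=\min\{m\geq 1:\ell^m\equiv\pm 1\pmod r\}$ is exactly what the paper does, and your argument from there is correct. The difference lies only in the choice of $r$: the paper simply takes $r=\ell^n+1$, which is automatically an $\ell'$-number, satisfies $\ell^n\equiv -1\pmod r$, and has $0<\ell^m\pm 1<r$ for every $1\leq m<n$, so no case analysis or appeal to Zsygmondy is needed. Your route via a primitive prime divisor of $\ell^{2n}-1$ is valid (and your handling of the Zsygmondy exceptions is fine), but it is strictly more work for the same conclusion; the paper's explicit $r$ also makes transparent how the defect grows with $n$, which is relevant to the discussion surrounding Conjecture~\ref{con:bdmf}.
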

	\begin{proof}
	    Of course, we are setting to $G$ and $B$ to be as in the rest of the article. We just need an appropriate choice of $p$ and $r$ and $\theta$.
	
	    Set $r=\ell^n+1$. By the Dirichlet prime number theorem, we can set $p$ to be a  prime congruent to $1$ modulo $\ell$ and modulo $r$. Set $\theta$ to be any faithful, linear character of $Z$. Note that $B(\theta)^{(\ell^m)}=B(\theta^{\ell^m})$, for all $m\in\mathbb{N}$, and so, by Proposition~\ref{prop non isom}, $\mf(B)$ is the smallest $m\in\mathbb{N}$, such that $\theta^{\ell^m}=\theta^{\pm1}$ or equivalently that $r|(\ell^m\pm 1)$. It is now clear that $\mf(B)=n$.
	\end{proof}

	\subsection*{Acknowledgements} The authors were supported by EPSRC grants 	
	EP/T004592/1 and EP/T004606/1. We would also like to thank Charles Eaton and Radha Kessar for helpful discussions on the subject matter of this paper.

	\bibliographystyle{alpha}

\end{document}